\def\allowsplits #1{\ifx #1\relax \else #1\hskip 0pt plus 1pt\relax
\expandafter\allowsplits\fi}%
\def\printnumber #1{\expandafter\allowsplits \romannumeral-`0#1\relax }%
\theoremstyle{plain}
\newtheorem{lemma}{Lemma}
\newtheorem{definition}{Definition}
\newtheorem{corollary}{Corollary}
\newtheorem{theorem}{Theorem}
\newtheorem{remark}{Remark}
\newtheoremstyle{derp}
{3pt}
{3pt}
{}
{}
{\upshape}
{:}
{.5em}
{}
\theoremstyle{derp}
\newtheorem{example}{Example}
\newcommand{\Z}{\mathbb{Z}}
\newcommand{\N}{\mathbb{N}}
\newcommand{\ID}{\mathrm{id}}
\newcommand\xqed[1]{%
  \leavevmode\unskip\penalty9999 \hbox{}\nobreak\hfill
  \quad\hbox{#1}}
\newcommand\qee{\xqed{$\fullmoon$}}
\newcommand{\Sym}{\mathrm{Sym}}
\newcommand{\Alt}{\mathrm{Alt}}
\newcommand{\OB}{\mathrm{OB}}
\newcommand{\RTM}{\mathrm{RTM}}
\title{Universal gates with wires in a row}
\author{
Ville Salo \\
vosalo@utu.fi
}
\begin{document}
\maketitle

\begin{abstract}
We give some optimal size generating sets for the group generated by shifts and local permutations on the binary full shift. We show that a single generator, namely the fully asynchronous application of the elementary cellular automaton 57 (or, by symmetry, ECA 99), suffices in addition to the shift. In the terminology of logical gates, we have a single reversible gate whose shifts generate all (finitary) reversible gates on infinitely many binary-valued wires that lie in a row and cannot (a priori) be rearranged. We classify pairs of words $u, v$ such that the gate swapping these two words, together with the shift and the bit flip, generates all local permutations. As a corollary, we obtain analogous results in the case where the wires are arranged on a cycle, confirming a conjecture of Macauley-McCammond-Mortveit and Vielhaber.
\end{abstract}

\section{Introduction}

In this paper, we study the subgroup $G$ of the self-homeomorphism group of $\{0,1\}^\Z$ generated by shifts and local permutations (or reversible finitary logical gates) on the set of all bi-infinite binary sequences, studied previously in \cite{BaKaSa16} under the name $\OB(2,1)$ (as a subgroup of the larger group $\RTM(2, 1)$). 

We prove a result about generators for the group $G$: In Section~\ref{sec:GeneratingByWS} we characterize those pairs of words $u, v$ such that the homeomorphism swapping $u$ and $v$ at the origin, together with the shift and the bit flip, generates all of $G$. In the process we identify some natural subgroups of $G$ in Section~\ref{sec:Subgroups}, with constraints related to linearity, and to (group-)geometric phenomena such as one-sided information flow.

We say a set of local permutations is a \emph{universal gate set} if it, together with the shift, generates $G$. As an application of the result about word swaps, we prove that ECA 57 is a singleton universal gate set by showing that it generates both a universal word swap and the bit flip. In particular, it follows that $G$ is $2$-generated.

In \cite{MaMcMo11,Vi12}, it is asked whether asynchronous ECA $57$ is universal in the case when wires are arranged on a cycle $\Z/n\Z$. We solve this question by showing that universality on $\Z$ implies universality on $\Z/n\Z$ for all large enough $n$ in Section~\ref{sec:ftofn}. Impatient readers can find copy-pasteable implementations\footnote{The formulas are obtained by direct application of the straight-line grammar deduced from the proof, and are probably not even close to optimal. Eight gates can be erased from the Toffoli gate implementation by simply erasing repeated generators.} of standard gates in terms of shifts of asynchronous ECA $57$ in the appendices.

We find the group $G$ interesting for several reasons: From the perspective of computation and logical gates, $G$ is precisely the computation you can do with reversible gates if your infinitely many wires are arranged in a row. It differs from the groupoid \cite{La03} and clone formalisms \cite{Bo15} most concretely in the aspect that the model has infinitely many borrowed bits (in the sense of \cite{Xu15}) which arise naturally from the infiniteness of $\Z$. Reordering of the wires is not built into the formalism, which strengthens the notion of universality. 

The group $G$ seems like a natural simplest group that can be used for the geometric group theory of reversible logic gates. 

The author finds $G$ and its variants useful in the context of symbolic dynamics, and has applied $G$ and its local embeddings (in the LEF sense \cite{CeCo10}) in several works on automorphism groups, see e.g. \cite{Sa17b,Sa18a}. 


\section{Definitions}

We assume $0 \in \N$ and in $f \circ g = fg$, $g$ is applied first. All intervals $[a,b]$, as well as half-open and open ones, are interpreted inside $\Z$. For two words $u, v \in \{0, 1\}^n$ write $D(u, v) \subset \{0,...,n-1\}$ for the set of coordinates where $u$ and $v$ differ, and $d(u, v) \in \{0, 1\}^n$ for the characteristic function of $D(u, v)$. Words are $0$-indexed by default.

We consider $X = \{0,1\}^\Z$ with the product topology arising from the discrete topology of $\{0, 1\}$. \emph{The shift} is $\sigma : X \to X$ defined by $\sigma(x)_i = x_{i + 1}$. A power $\sigma^n$ is also informally referred to as \emph{a shift}, which should not cause confusion. For $n \geq 1$, the \emph{$n$-periodic points} are the points $x \in X$ satisfying $\sigma^n(x) = x$. The set of $n$-periodic points is denoted by $X_n$. The \emph{shifts} of a function are its conjugates by shifts $\sigma^n$.

There is an obvious way to see $X$ as a vector space over the two-element field, and addition and terms such as \emph{linear} and \emph{affine} for maps on $X$ are interpreted with respect to this structure. The subset $X_0 = \{x \in X \;|\; \exists R: \forall |i| \geq R: x_i = 0\}$ is a subspace. An \emph{affine translation} refers to a function $t : X \to X$ of the form $x \mapsto x + w$, with $w \in X_0$.

We use the $\langle F_1, ..., F_n \rangle$ notation to denote the smallest subgroup of a group $G$ (deduced from the context) containing the elements equal to $F_i$ (when $F_i \in G$) or found in $F_i$ (when $F_i$ is a set). 

\section{$G$}

\begin{definition}
The group $G$ is defined as the subgroup of the self-homeomorphism group of $X$ generated by the shift $\sigma$ and homeomorphisms $f : X \to X$ satisfying
\[ \exists r: |i| \geq r \implies f(x)_i = x_i. \]
\end{definition}

We make some basic observations about the structure of this group $G$ (see also \cite{BaKaSa16,Sa17b}).

We call self-homeomorphisms $f$ of $X$ satisfying
$\exists r: |i| \geq r \implies f(x)_i = x_i$
\emph{local permutations}, \emph{inert\footnote{This word is by analogue with subshift automorphism group terminology \cite{KiRo91}.} elements of $G$} or \emph{gates}. A set of gates $F$ is \emph{universal} if $G = \langle F, \sigma \rangle$.

For an inert element $f \in G$, the finite set of coordinates that may change under the application of $f$ is called its \emph{support}. Of more importance than the support and the number $r$ is the $R$ in the following lemma, which states that if $f$ is inert in $G$, then not only does it change only finitely many symbols in a given point of $X$, but it also \emph{looks at} only finitely many coordinates.

\begin{lemma}
\label{lem:LocalRule}
An element $f \in G$ is inert if and only if there exists a \emph{strong radius} $R$ and a \emph{local rule} $\hat f : \{0,1\}^{[-R,R]} \to \{0,1\}^{[-R,R]}$ such that $f(x)_i = x_i$ for all $|i| > R$, and $f(x)_i = \hat f(x_{[-R, R]})_i$ for $i \in [-R, R]$.
\end{lemma}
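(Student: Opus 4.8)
The plan is to prove the two implications separately, with essentially all the content in the forward direction. The reverse implication is immediate: if a strong radius $R$ and a local rule $\hat f$ exist as described, then in particular $f(x)_i = x_i$ for all $|i| > R$, so $f$ is inert with witness $r = R + 1$.

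For the forward direction I would begin by recording that $f$ is continuous: every element of $G$ is a composition of the shift and local permutations, all of which are self-homeomorphisms of $X$, so $G$ consists of homeomorphisms and in particular $f$ is continuous. The key tool is then the standard fact that a continuous function $g : X \to \{0,1\}$, with $\{0,1\}$ discrete, depends on only finitely many coordinates. I would prove this by compactness: the fibers $g^{-1}(0)$ and $g^{-1}(1)$ are clopen, hence each is a finite union of cylinder sets, so there is a radius $N$ with $g(x)$ determined by $x_{[-N,N]}$. Equivalently, $g$ is uniformly continuous on the compact space $X$.

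Next I would apply this to the coordinate maps. Since $f$ is inert, fix $r$ with $f(x)_i = x_i$ for all $|i| \geq r$; the only coordinates whose value can differ from the input thus lie in the finite window $|i| < r$. For each of these finitely many indices $i$, the map $x \mapsto f(x)_i$ is continuous, hence determined by $x_{[-R_i, R_i]}$ for some finite $R_i$. Setting $R = \max\bigl(r, \max_{|i| < r} R_i\bigr)$ — a maximum over a finite set, which is where inertness is essential, since it bounds the number of nontrivial coordinate maps — I claim $R$ is the desired strong radius. Indeed, for $|i| > R \geq r$ we have $f(x)_i = x_i$; for $i \in [-R,R]$ with $|i| \geq r$ we again have $f(x)_i = x_i$, which trivially depends only on $x_{[-R,R]}$; and for $|i| < r$ the value $f(x)_i$ depends only on $x_{[-R_i,R_i]} \subseteq x_{[-R,R]}$. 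Hence $f(x)_i$ for $i \in [-R,R]$ depends only on $x_{[-R,R]}$, so setting $\hat f(w)_i := f(x)_i$ for any $x$ with $x_{[-R,R]} = w$ yields a well-defined local rule $\hat f : \{0,1\}^{[-R,R]} \to \{0,1\}^{[-R,R]}$ with the required properties.

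The argument has no serious obstacle; the only point requiring care is the uniformization of the per-coordinate dependence radii into a single $R$, which succeeds precisely because inertness confines the coordinates that can change to a finite window, so that only finitely many coordinate maps $x \mapsto f(x)_i$ must be controlled and the maximum of their radii is finite.
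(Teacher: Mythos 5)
Your proof is correct and follows essentially the same route as the paper: both use continuity (equivalently, compactness of $X$) to bound the dependence radius of the finitely many coordinates inside the inertness window, then enlarge to a common $R \geq r$ and observe the remaining coordinates in $[-R,R]$ are unchanged, hence trivially local. The paper additionally notes that $\hat f$ is a bijection because $f$ is a homeomorphism, but the statement as written does not require this, so your argument is complete.
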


\begin{proof}
Clearly any $R$ and $\hat f$ define an inert element of $G$.

Suppose then $f \in G$ is inert, and let $r$ be as in the definition. Since $f$ is continuous, there exists $R$ such that $f(x)_{[-r, r]}$ is determined by $x_{[-R, R]}$, where we may assume $R \geq r$. Since $f(x)_{i} = x_i$ when $|i| \in [-R, -r-1] \cup [r+1, R]$, in fact $f(x)_{[-R,R]}$ is also determined by $x_{[-R,R]}$. So we can pick $\hat f(w) = f(x^w)_{[-R,R]}$ where $x^w \in \{0,1\}^\Z$ is any point with $x^w_{[-R,R]} = w$. Since $f$ is a homeomorphism, it is in particular a bijection, so $\hat f$ is also a bijection.
\end{proof}

Define the \emph{strong support} of inert $f$ as the set of coordinates that can be changed by $f$, and whose change may affect another coordinate, which is finite by the previous lemma.

The number $R$ can be arbitrarily larger than $r$, even if $r = 0$.

\begin{remark}
We note that (by easy counterexamples) the inert elements of $G$ are not the same thing as self-homeomorphisms of $X$ which have clopen support in the usual sense of homeomorphisms (where the support of a homeomorphism is the closure of the complement of its set of fixed points). Neither are they the same thing as self-homeomorphisms of $X$ which always modify only finitely many coordinates, or ones that always modify only a bounded number of coordinates. By the discussion above, they \emph{are} precisely the self-homeomorphisms of $X$ which always modify a bounded \emph{set} of coordinates. 
\end{remark}

\begin{lemma}
Elements of $G$ can be written in a unique way as $\sigma^n \circ f$ with $f$ inert. There is a unique homomorphism $\alpha : G \to \Z$ satisfying $\alpha(\sigma) = 1$. The kernel of $\alpha$ consists of the inert elements. The group $G$ is (locally finite)-by-$\Z$, more precisely a semidirect product of $\Z$ acting on the locally finite group of inert elements.
\end{lemma}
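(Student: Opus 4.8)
The plan is to establish the structural claims about $G$ in the order they are stated, building on Lemma~\ref{lem:LocalRule} and the homomorphism $\alpha$.

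First I would construct $\alpha$ and use it to get the decomposition. Every generator of $G$ is either a shift $\sigma$ or an inert homeomorphism, so I would define $\alpha$ on generators by $\alpha(\sigma) = 1$ and $\alpha(f) = 0$ for each inert generator $f$, and argue that this extends to a well-defined homomorphism $\alpha : G \to \Z$. The cleanest way to see well-definedness is to observe that each element $g \in G$ has an intrinsic ``shift part'': since $g$ is a composition of shifts and inert maps, and inert maps fix all coordinates outside a bounded window while shifts translate, $g$ must agree with some fixed power $\sigma^n$ outside a large enough window (formally, $g(x)_i$ depends on $x_{i-C}, \dots, x_{i+C}$ and equals $x_{i+n}$ for $|i|$ large). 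This integer $n$ is determined by $g$ alone, so setting $\alpha(g) = n$ gives a function on $G$; checking it is multiplicative is routine since the shift parts add. This $\alpha$ sends $\sigma \mapsto 1$ and is the unique such homomorphism because $G$ is generated by $\sigma$ together with inert elements, and any homomorphism to $\Z$ is forced to kill the torsion/inert part.

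Next I would identify the kernel of $\alpha$ with the inert elements. The inclusion $\supseteq$ is immediate from the definition of $\alpha$. For $\subseteq$, given $g \in G$ with $\alpha(g) = 0$, the intrinsic shift part computation above shows $g$ agrees with $\sigma^0 = \ID$ outside a bounded window, i.e.\ $g$ fixes all but finitely many coordinates, so $g$ is inert. Writing an arbitrary $g \in G$ as $\sigma^{\alpha(g)} \circ (\sigma^{-\alpha(g)} g)$ then gives the decomposition $g = \sigma^n \circ f$ with $f = \sigma^{-n} g$ inert and $n = \alpha(g)$; uniqueness follows since $\sigma^n f = \sigma^m f'$ with $f, f'$ inert forces $\alpha$ to give $n = m$ and hence $f = f'$. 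This also shows $G$ is the semidirect product of $\Z = \langle \sigma \rangle$ acting (by conjugation, i.e.\ by shifting supports) on the normal subgroup $N = \Ker \alpha$ of inert elements; the splitting is given by $n \mapsto \sigma^n$, and $N$ is normal as the kernel of a homomorphism.

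The main obstacle, and the one requiring the most care, is showing $N$ is locally finite, i.e.\ that every finitely generated subgroup of inert elements is finite. The key observation is that for finitely many inert elements $f_1, \dots, f_k$, by Lemma~\ref{lem:LocalRule} each $f_j$ has a strong radius, so there is a common window $[-R, R]$ such that every $f_j$ fixes all coordinates outside $[-R,R]$ and acts on $x_{[-R,R]}$ via a permutation of $\{0,1\}^{[-R,R]}$ determined by that window alone. Crucially, composing two such maps keeps this property with the same window: any word in the $f_j$ and their inverses still fixes everything outside $[-R,R]$ and still acts as a permutation of the finite set $\{0,1\}^{[-R,R]}$ that depends only on the restriction to $[-R,R]$. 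Hence $\langle f_1, \dots, f_k \rangle$ embeds into the finite symmetric group $\Sym(\{0,1\}^{[-R,R]})$, so it is finite. This local finiteness, together with the semidirect product structure already established, yields that $G$ is (locally finite)-by-$\Z$, completing the proof. The subtle point to verify carefully is that the window $[-R,R]$ really is preserved under composition and inversion, which is exactly what the local-rule formulation of Lemma~\ref{lem:LocalRule} guarantees.
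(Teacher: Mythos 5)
Your proposal is correct and reaches the same decomposition and the same structural conclusions as the paper, but the two arguments are organized in opposite directions, and the comparison is worth a word. The paper first establishes the normal form $\sigma^n \circ f$ algebraically — the inert elements form a subgroup normalized by $\sigma$, so in any word in the generators the shifts can be pushed to one side — and only then reads off $\alpha$ from the normal form, asserting that $n$ is ``clearly'' unique. You instead construct $\alpha$ first as an intrinsic invariant, the asymptotic shift part: $g(x)_i = x_{i+n}$ for all $|i|$ large, with $n$ determined by $g$ alone. This makes the well-definedness that the paper leaves implicit completely explicit (equivalently, it is the observation that no nonzero power of $\sigma$ is inert), and the normal form then falls out as $g = \sigma^{\alpha(g)} \circ (\sigma^{-\alpha(g)}g)$. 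Both routes are sound; yours buys a cleaner justification of uniqueness at the cost of a small amount of bookkeeping about windows under composition. On local finiteness you are also slightly more careful than the paper: the paper's one-line remark about unions of supports really needs the strong-radius formulation of Lemma~\ref{lem:LocalRule} (a common window $[-R,R]$ preserved under composition and inversion, giving an embedding of the finitely generated subgroup into $\Sym(\{0,1\}^{[-R,R]})$), which is exactly the point you flag and verify. No gaps.
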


The inert map $g$ in $f = \sigma^n \circ g$ is called the \emph{inert part} of $f$.

\begin{proof}
Since the local permutations form a group themselves, and this group is normalized by $\sigma$ (i.e. conjugation of an inert element by $\sigma$ gives an inert element), it follows that every element of $G$ is of the form $\sigma^n \circ f$ where $f$ is inert, and clearly $n$ is uniquely determined, after which $f$ is clearly uniquely determined. Any homomorphism to $\Z$ maps any inert $f$ to $0$ (since inert elements are of finite order), so by the normal form, $\sigma \mapsto 1$ uniquely determines the homomorphism.

It follows from this that $\ker \alpha$ consists of precisely the inert elements, and inert elements clearly form a locally finite group, since the support of the composition of two inert element is at most the union of the support of the generators. This gives $G$ the structure of a (locally finite)-by-$\Z$ group, and since the homomorphism $\alpha$ is split, $G$ is in fact a semidirect product of $\Z$ acting on the locally finite group of inert elements.
\end{proof}

We end this section with some general discussion.

The inert elements can be naturally considered reversible gates by Lemma~\ref{lem:LocalRule}, as they perform an operation on finitely many wires of the input independently of the other wires. From this perspective, the shift should be seen as either a technical tool or as an \emph{internal symmetry} of the group -- shifts are not necessarily computationally meaningful, but \emph{conjugation by a shift} is very meaningful, as it simply means applying a gate to another contiguous set of wires, and this is precisely what it means to say that $G$ is a semidirect product of $\Z$ acting on the inert elements.

Though we do not do so here, from this point of view of logical gates it would be natural to see $G$ not just as a group but as a group where the shifts are part of the structure. A group $H$ with a fixed epimorphism $\pi : H \to \Z$ is often called \emph{($\Z$-)indicable}, so one could sensibly call a group with a fixed split epimorphism $\pi : H \to \Z$ \emph{strongly $\Z$-indicable}, with the understanding that $\pi$ and the right inverse can be taken to be part of the structure, if needed. In particular if the $\Z$-embedding giving the right inverse is taken part of the structure, then substructures of $G$ always contain the shifts.

We mention some standard notions in category theory with are related: One can see a group $H$ with a preferred $\Z$-subgroup as a pair $(H, \psi)$ where $\psi : \Z \to H$ is a homomorphism (that is part of the structure). Such pairs form the \emph{under category}, also known as the \emph{coslice category}, $\Z \downarrow \mathrm{Grp}$. This does not enforce the semidirect product structure in the sense that the $\Z$ picked out by $\psi$ can behave very badly in other objects of this category, but this does enforce the property that subobjects of $G$ automatically contain the shifts (note that monomorphisms in Grp are the injections). Dually, one can see $G$ as an indicable group with a fixed $\pi : G \to \Z$, i.e. an element of the slice category $\mathrm{Grp} \downarrow \Z$, but this does not force subobjects of $G$ to contain the shifts. 



\section{$G_n$}
\label{sec:ftofn}

\begin{definition}
For any $n \in \N$, write $G_n = \Alt(\{0,1\}^n)$.
\end{definition}

The notation is meant to imply that $G_n$ can be seen as a modulo $n$ variant of $G$. In this section, we develop an explicit algebraic connection.

For inert $f$, the \emph{strong shift-invariant radius} of $f$ is the minimal $R$ among strong radii of conjugates $\sigma^{m} \circ f \circ \sigma^{-m}$, and for general $f \in G$ the strong shift-invariant radius is the strong shift-invariant radius of the inert part. The importance of the strong shift-invariant radius is the following.

\begin{definition}
\label{def:ftofn}
Let $f = \sigma^m \circ g \in G$, with $g$ inert with strong shift-invariant radius $R$. 
Suppose $n > 2R+1$. Then for $x \in X_n$ define
\[ \bar f_n(x) = \sigma^m \circ \lim_{k \rightarrow \infty} \left(\prod_{i = -k}^k \sigma^{-in} \circ g \circ \sigma^{in}\right)(x) \]
For $w \in \{0,1\}^n$ define $f_n(w) = \bar f_n(...www.www...)_{[0,n-1]}$.
\end{definition}

In particular, $\sigma_n(aw) = wa$ for $a \in \{0,1\}, w \in \{0,1\}^{n-1}$.

Since $n > 2R+1$, the limit is well-defined as the maps $\sigma^{-in} \circ g \circ \sigma^{in}$ commute pairwise (for distinct values of $i$) and every $j \in \Z$ is in the support of only finitely many of them (in fact only one of them). Observe that $\bar f_n$ commutes with $\sigma^n$, and thus preserves the set of $n$-periodic points. The action of $\bar f_n$ on the $n$-periodic points of $\{0,1\}^\Z$ fully determines the action of $f_n$ on $\{0,1\}^n$, and vice versa.

\begin{remark}
\label{rem:Formula}
For completeness, we give a formula for $f_n$. Let $f \in G \setminus \{\ID\}$ be inert, let the strong shift-invariant radius of $f$ be $R$, and let $m$ be minimal such that $\sigma^m \circ f \circ \sigma^{-m}$ has strong radius $R$, and let $\hat f : \{0,1\}^{[-R, R]} \to \{0,1\}^{[-R, R]}$ be the local rule for $\sigma^m \circ f \circ \sigma^{-m}$. If $[m-R, m+R] \equiv [a, b] \bmod n$ for some $a, b \in [0,n-1]$ then define
\[ f_n(w) = w_{[0,a)} \cdot \hat f(w_{[a, b]}) \cdot w_{(b, n-1]}, \]
and if $[m-R, m+R] \equiv [0,a) \cup (n-1-b,n-1] \bmod n$ for some $a, b \in [0,n-1]$ (where possibly $a = 0$ or $b = 0$) then define
\[ f_n(w) = \hat f(u)_{[b,2R+1)} \cdot w_{[a, n-1-b]} \cdot \hat f(u)_{[0, b)} \]
where $u = w_{(n-1-b, n-1]} \cdot w_{[0, a)}$. For $f = \sigma^k \circ g$ with $g$ inert, we have $f_n = \sigma_n^k \circ g_n$
\end{remark}

Note that if $m, R, \hat f$ are as above, then $f$ applies its local rule $\hat f$ ``at $m$'', i.e. in the coordinates $[m-R, m+R]$. 

\begin{lemma}
\label{lem:Sufficientn}
Let $f = \sigma^k \circ g$ with $g$ inert with strong shift-invariant radius $R$. If $n > 2R+1$, then $f_n \in G_n$. 
\end{lemma}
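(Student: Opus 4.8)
The plan is to show that $f_n$ has sign $+1$ as a permutation of $\{0,1\}^n$, so that it lies in $\Alt(\{0,1\}^n) = G_n$. By Remark~\ref{rem:Formula} we may factor $f_n = \sigma_n^k \circ g_n$, and since the sign is a homomorphism it suffices to check that $\sigma_n$ and $g_n$ are both even.

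For the inert part I would read the structure of $g_n$ off the formula in Remark~\ref{rem:Formula}: $g_n$ fixes $n - (2R+1)$ of the coordinates and, on the remaining window of length $2R+1$, applies the fixed bijection $\hat g$ supplied by Lemma~\ref{lem:LocalRule}, independently of the values of the frozen coordinates (this is literal in the non-wrapping case $w_{[0,a)}\cdot \hat g(w_{[a,b]}) \cdot w_{(b,n-1]}$, and holds after the cyclic reindexing $u = w_{(n-1-b,n-1]}\cdot w_{[0,a)}$ in the wrapping case). Thus $g_n$ is a disjoint union of exactly $2^{\,n-(2R+1)}$ copies of $\hat g$, one for each assignment of the frozen coordinates, whence
\[ \sign(g_n) = \sign(\hat g)^{\,2^{\,n-(2R+1)}}. \]
The hypothesis $n > 2R+1$ forces the exponent to be a positive power of two, hence even, so $\sign(g_n) = +1$ no matter what $\sign(\hat g)$ is. This is exactly where the bound on $n$ is used, and it is the clean half of the argument.

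For the shift I would view $\{0,1\}^n$ as an $n$-fold product on whose coordinates $\sigma_n$ acts by a cyclic permutation, and write that coordinate permutation as a product of adjacent transpositions. Each adjacent transposition induces on $\{0,1\}^n$ the involution that exchanges two neighbouring letters; it fixes the words with equal letters in those two positions and swaps the other $2^{n-2}$ pairs, so it is a product of $2^{n-2}$ transpositions, which is even once $n \geq 3$. Hence every such generator acts evenly, so $\sign(\sigma_n) = +1$, and combining with the previous paragraph gives $\sign(f_n) = \sign(\sigma_n)^k\,\sign(g_n) = +1$.

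The two sign computations are routine; the point that needs care is the parity of $\sigma_n$, which is the genuinely combinatorial input and is also where the single exceptional case hides: for $n = 2$ (which is only reachable when $R = 0$ and $g = \ID$, i.e.\ for a pure shift) the adjacent swap is a single transposition and $\sigma_2$ is odd, so I would either state the lemma for $n \geq 3$ or note that this degenerate case is irrelevant to the intended application with large $n$.
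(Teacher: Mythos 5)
Your argument is essentially identical to the paper's: it factors $f_n = \sigma_n^k \circ g_n$, observes that $g_n$ is $2^{n-(2R+1)}$ disjoint copies of the same permutation (hence even since $n > 2R+1$), and handles $\sigma_n$ by decomposing the cyclic coordinate permutation into adjacent swaps, each of which acts on $\{0,1\}^n$ as $2^{n-2}$ transpositions. You even flag the same $n \geq 3$ caveat for the parity of $\sigma_n$ that the paper implicitly relies on, so the proposal is correct and matches the paper's proof.
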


\begin{proof}
Consider first inert $g$ with strong shift-invariant radius $R$. Since $n > 2R+1$, by the formulas of Remark~\ref{rem:Formula}, $g_n$ permutes only the contents of some $2R + 1$ coordinates of $[0,n-1]$, so in fact $g_n$ performs $2^{n-(2R+1)}$ ``copies'' of the same permutation depending on the values of the other $n - 2R - 1$ coordinates, thus it is a composition of an even number of permutations with disjoint supports, all having the same parity, thus $g_n \in G_n$.

The shift $\sigma_n$ is a permutation of the $n$ coordinates of a given word, so it can be written as a product of swaps of adjacent coordinates. When $n \geq 3$, coordinate swaps are even by the argument of the above paragraph, and thus $\sigma_n$ is even as a composition of even permutations. Thus for $f = \sigma^k \circ g$ with $g$ inert, we have $f_n = \sigma^k_n \circ g_n \in G_n$.
\end{proof}

The proof shows that $\sigma_n \in G_n$ whenever $n \geq 3$. We mention a combinatorial consequence of this: The parity of a permutation on a set $X$ is the parity of $|X| - c$ where $c$ is the number of cycles in the cycle decomposition. Thus the sign of $\sigma_n$ is $2^n - p_n$ where $p_n$ is the number of orbits of words of length $n$ under the cyclic shift. The sequence $p_n$ is well-known, it is sequence A000031 in the OEIS, and a standard formula for it is $p_n = \frac{1}{n} \sum_{d|n} \phi(d) 2^{n/d}$ (where $\phi$ is the Euler totient function). We are stating precisely that $p_n$ is even for all $n \geq 3$. Since $p_1 = 2$ and $p_2 = 3$, $p_2$ is the only odd value in the sequence $p_1, p_2, p_3, ...$.


\begin{lemma}
\label{lem:ConjugationThingie}
Let $g$ be inert with strong shift-invariant radius $R$, and suppose $n > 2R + 1$. Then $g_n \circ \sigma_n^m = \sigma_n^m \circ (\sigma^{-m} \circ g \circ \sigma^m)_n$ for all $m \in \Z$.
\end{lemma}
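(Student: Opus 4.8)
The plan is to lift the whole identity from words to $n$-periodic points, where the construction of Definition~\ref{def:ftofn} is transparent, and then reduce it to a single conjugation computation. Let $\phi : \{0,1\}^n \to X_n$ be the bijection sending $w$ to the point $\ldots www.www\ldots$ with $w$ occupying coordinates $[0,n-1]$; by the definition $f_n(w) = \bar f_n(\ldots www.www\ldots)_{[0,n-1]}$ we have $f_n = \phi^{-1}\circ \bar f_n\circ \phi$ for every $f$ to which the construction applies, so $f\mapsto f_n$ is precisely the conjugate by $\phi$ of $f\mapsto \bar f_n|_{X_n}$. Reading off the special cases of Definition~\ref{def:ftofn}, $\bar\sigma_n$ is just $\sigma$ restricted to $X_n$ (whence $\sigma_n^m = \phi^{-1}\circ\sigma^m\circ\phi$), while for inert $g$ the map $\bar g_n$ is the $n$-periodization $\lim_{k}\prod_{i=-k}^k \sigma^{-in}\circ g\circ \sigma^{in}$.

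First I would check that the right-hand side even makes sense. Writing $h = \sigma^{-m}\circ g\circ\sigma^m$, the map $h$ is inert and is a shift-conjugate of $g$; since the strong shift-invariant radius is by definition the minimum of the strong radii over all shift-conjugates, $h$ shares the strong shift-invariant radius $R$ of $g$. Hence $n > 2R+1$ is exactly the hypothesis needed for $(\sigma^{-m}\circ g\circ\sigma^m)_n$ to be defined and for $\bar h_n$ to be a genuine periodization. Now, since $f\mapsto\phi^{-1}\circ\bar f_n\circ\phi$ is a conjugation and hence preserves composition and powers, the desired word identity $g_n\circ\sigma_n^m = \sigma_n^m\circ h_n$ is equivalent to the identity $\bar g_n\circ\sigma^m = \sigma^m\circ\bar h_n$ of self-homeomorphisms of $X_n$, i.e. to $\sigma^m\circ\bar h_n\circ\sigma^{-m} = \bar g_n$.

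This last identity I would prove by the obvious termwise computation. Conjugation by $\sigma^m$ distributes over the finite partial products, and since $\sigma^m$ commutes with each $\sigma^{-in}$ and $\sigma^m\circ h\circ\sigma^{-m} = g$, one gets for every finite $k$
\[ \sigma^m\circ\Big(\prod_{i=-k}^k \sigma^{-in}\circ h\circ\sigma^{in}\Big)\circ\sigma^{-m} = \prod_{i=-k}^k \sigma^{-in}\circ g\circ\sigma^{in}. \]
Letting $k\to\infty$, where both limits exist by the pairwise-commuting / locally-finite-support argument following Definition~\ref{def:ftofn}, yields $\sigma^m\circ\bar h_n\circ\sigma^{-m} = \bar g_n$, as required. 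The only point demanding care is this passage to the limit, namely that conjugation by $\sigma^m$ commutes with the infinite product; but this is harmless precisely because the identity already holds verbatim for each finite partial product, so it is inherited by the pointwise (locally eventually constant) limits. I expect the one spot worth writing out is the termwise reindexing $\sigma^m\sigma^{-in}\sigma^{-m} = \sigma^{-in}$ together with $\sigma^m h\sigma^{-m}=g$, confirming that the conjugation returns the factors of $\bar g_n$ without shuffling them out of range.
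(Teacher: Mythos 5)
Your proposal is correct and follows essentially the same route as the paper: pass to the action on $n$-periodic points, move $\sigma^m$ through the infinite product by conjugating each factor ($\sigma^m\sigma^{-in}\sigma^{-m}=\sigma^{-in}$, $\sigma^m h\sigma^{-m}=g$), and note the identity survives the limit. Your added remarks that the shift-conjugate $h$ inherits the strong shift-invariant radius $R$ (so the right-hand side is well-defined) and that the limit interchange is justified by the finite partial products are welcome elaborations of points the paper leaves implicit.
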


\begin{proof}
We prove instead the equivalent equality $\bar g_n \circ \bar \sigma_n^m = \bar \sigma_n^m \circ \overline{(\sigma^{-m} \circ g \circ \sigma^m)}_n$ for the actions on $n$-periodic points. Letting $x \in X_n$, we have
\begin{align*}
\bar g_n \circ \bar \sigma_n^m(x) &= \lim_{k \rightarrow \infty} \left(\prod_{i = -k}^k \sigma^{-in} \circ g \circ \sigma^{in}\right)(\sigma^m(x)) \\
&= \sigma^m \circ \lim_{k \rightarrow \infty} \left(\prod_{i = -k}^k \sigma^{-in} \circ (\sigma^{-m} \circ g \circ \sigma^m) \circ \sigma^{in}\right)(x) \\
&= \bar \sigma_n^m \circ \overline{(\sigma^{-m} \circ g \circ \sigma^m)}_n(x)
\end{align*}
where $\sigma^m$ waddles over the finite expressions by using abelianity of $\Z$ and by conjugating $g$s, and the result stays correct in the limit since $\sigma$ is continuous. 
\end{proof}

The map $f \mapsto f_n$ does not give a homomorphism from $G$ to $G_n$ (it is not even well-defined for all elements of $G$), and even if we induce a mapping from $G$ to $G_n$ using a finite generating set of $G$, it will typically not give a homomorphism (the group $G$ is not residually finite).

However, this mapping does give us local homomorphisms in the sense of LEF groups \cite{CeCo10}: as long as information is ``not passed around the cycle'', we see the same relations in both $G$ and $G_n$. The following lemma is a weak version of this fact:

\begin{lemma}
\label{lem:WeakLEF}
Let $f^1, ..., f^k \in G \setminus \{\ID\}$ with $f^i = \sigma^{\ell_i} \circ g^i$, where $g^i$ is inert. Let $g^i$ have strong shift-invariant radius $R_i$ and let $m_i$ be minimal such that $\sigma^{m_i} \circ g_i \circ \sigma^{-m_i}$ has strong radius $R_i$. Let $t = \sum_j |\ell_j|$. Suppose that for some $h$, $[m_i-t-R_i, m_i+t+R_i] \subset [h, h+n-1]$ for all $i$. 
Then $(f^1 \circ \cdots \circ f^k)_n = f^1_n \circ \cdots \circ f^k_n$.
\end{lemma}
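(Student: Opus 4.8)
The plan is to prove the equivalent statement for the induced actions on the $n$-periodic points $X_n$, namely $\overline{(f^1 \circ \cdots \circ f^k)}_n = \overline{f^1}_n \circ \cdots \circ \overline{f^k}_n$, since by the discussion following Definition~\ref{def:ftofn} these actions determine the maps on $\{0,1\}^n$. Throughout I write $P(g)$ for the periodization $\lim_{\ell\to\infty} \prod_{i=-\ell}^\ell \sigma^{-in} \circ g \circ \sigma^{in}$ of an inert $g$, so that $\overline{(\sigma^m \circ g)}_n = \sigma^m \circ P(g)$ on $X_n$; the radius hypothesis guarantees that every periodization appearing below is well-defined.

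First I would put the composition into normal form on the group side. Using only that shifts commute and the elementary conjugation identity $g \circ \sigma^m = \sigma^m \circ (\sigma^{-m} \circ g \circ \sigma^m)$, one rearranges
\[ f^1 \circ \cdots \circ f^k = \sigma^{L} \circ \tilde g^1 \circ \cdots \circ \tilde g^k, \quad \tilde g^j = \sigma^{-b_j} \circ g^j \circ \sigma^{b_j}, \]
where $L = \sum_j \ell_j$ and $b_j = \ell_{j+1} + \cdots + \ell_k$, so that $\tilde g^1 \circ \cdots \circ \tilde g^k$ is the inert part of the composition. Since $|b_j| \leq t$ and conjugation by $\sigma^{b_j}$ translates the active window of $g^j$ (the coordinates it may change or read) by $b_j$, the active window of $\tilde g^j$ lies in $[m_j - R_j + b_j,\, m_j + R_j + b_j] \subseteq [m_j - t - R_j,\, m_j + t + R_j] \subseteq W$, where $W := [h, h+n-1]$. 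This localization, which is exactly where the $\pm t$ padding of the hypothesis is used, is the heart of the argument.

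Next I would rearrange the right-hand side verbatim. The key point is that Lemma~\ref{lem:ConjugationThingie} gives periodization the formally identical conjugation rule $P(g) \circ \sigma^m = \sigma^m \circ P(\sigma^{-m} \circ g \circ \sigma^m)$ on $X_n$; since the rearrangement of the previous step used only this rule together with associativity, applying it to $\overline{f^1}_n \circ \cdots \circ \overline{f^k}_n = \sigma^{\ell_1} \circ P(g^1) \circ \cdots \circ \sigma^{\ell_k} \circ P(g^k)$ produces $\sigma^{L} \circ P(\tilde g^1) \circ \cdots \circ P(\tilde g^k)$ with the same $L$ and the same $\tilde g^j$ (the conjugations accumulate identically).

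It then remains to establish that periodization is multiplicative on maps localized in a single window, i.e. $P(\tilde g^1) \circ \cdots \circ P(\tilde g^k) = P(\tilde g^1 \circ \cdots \circ \tilde g^k)$ on $X_n$; combined with $\overline{(f^1 \circ \cdots \circ f^k)}_n = \sigma^L \circ P(\tilde g^1 \circ \cdots \circ \tilde g^k)$, obtained by applying Definition~\ref{def:ftofn} to the normal form, this closes the proof. To see multiplicativity, observe that the translates $\{W + in\}_{i \in \Z}$ partition $\Z$ into tiles of length $n$, and that for a map with active window in $W$ the periodization simply applies it independently inside each tile, consistently across tiles by the $n$-periodicity of points of $X_n$. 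Because each $\tilde g^j$ neither reads nor changes coordinates outside its own tile, copies in distinct tiles commute, while within a fixed tile the factors compose to $\tilde g^1 \circ \cdots \circ \tilde g^k$; since the active window of a composition is contained in the union of the active windows and hence still in $W$, the composite remains localized and its periodization is precisely the product computed above. The main obstacle is exactly this bookkeeping — verifying that every intermediate map produced by the rearrangement stays confined to $W$, so that the periodic copies act on disjoint, non-interacting tiles. This is what fails the moment information is allowed to ``pass around the cycle,'' and it is precisely what the hypothesis is engineered to prevent.
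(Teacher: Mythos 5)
Your proposal is correct and follows essentially the same route as the paper's proof: both reduce to the inert case by commuting the shifts to the left (via Lemma~\ref{lem:ConjugationThingie} on the periodized side and the corresponding group identity on the other), verify that the $\pm t$ padding keeps every conjugated window inside $[h,h+n-1]$, and then conclude by observing that periodizations of maps confined to a single length-$n$ tile compose tile-by-tile, so that $g \mapsto \bar g_n$ is multiplicative there. Your write-up is somewhat more explicit than the paper's about the tiling/commutation bookkeeping, but the argument is the same.
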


In particular this lemma shows that $G$ is indeed a LEF group.

\begin{proof}
First, we reduce to the inert case.

Observe that by iterating Lemma~\ref{lem:ConjugationThingie} the RHS of the equality is
\[ f^1_n \circ \cdots \circ f^k_n = \sigma^{\ell_1}_n \circ g^1_n \circ \cdots \circ \sigma^{\ell_k}_n \circ g^k_n = \sigma^{\sum_j \ell_j}_n \circ h^1_n \circ \cdots \circ h^k_n \]
where $h^i = \sigma^{-t_i} \circ g^i \circ \sigma^{t_i}$ for some $|t_i| \leq t$ (where $t_i$ is some partial sum). In particular, the $h^i$ are inert with strong shift-invariant radius $R_i$. The LHS on the other hand is
\[ (f^1 \circ \cdots \circ f^k)_n = (\sigma^{\sum_j \ell_j} \circ h^1 \circ \cdots \circ h^k)_n = \sigma^{\sum_j \ell_j}_n \circ (h^1 \circ \cdots \circ h^k)_n \]
by definition.

We may thus assume $\ell_i = 0$ for all $i$, by replacing each $m_i$ with $m_i + t_i$ (after which the corresponding assumption $[m_i - R_i, m_i + R_i] \subset [h, h+n-1]$ still holds), and by cancelling the common part $\sigma^{\sum_j \ell_j}_n$. We have then reduced the problem to the case where the $f^i = g^i = h^i$ are inert.

Let us now prove the equivalent equality
\[ \overline{(f^1 \circ \cdots \circ f^k)}_n = \bar f^1_n \circ \cdots \circ \bar f^k_m. \]
on $X_n$. For this, observe that the action of the maps $f^i$ on the interval $[h, h+n-1]$ is precisely the same as the action of the maps $\hat f^i_n$, and the action of the $\hat f^i_n$ on any contiguous sequence of $n$ integers determines the action. Thus the map $f^i \mapsto \bar f^i_n$ even induces a homomorphism from $\langle f^1,...,f^k \rangle$ to $\langle \bar f^1_n, ..., \bar f^k_n \rangle$, in particular we have $(f^1 \circ \cdots \circ f^k)_n = f^1_n \circ \cdots \circ f^k_n$ as desired.
\end{proof}

\section{How not to generate $G$ (subgroups)}
\label{sec:Subgroups}

We now define some important self-homeomorphisms of $X$ (not all in $G$) and make simple observations about subgroups they generate. For the purpose of universality, the importance of understanding subgroups is of course that if a set $F \subset G$ is contained in a proper subgroup $F \subset H < G$, then also $\langle F \rangle \leq H < G$, preventing $F$ from generating $G$.

The \emph{reversal} $R : X \to X$ defined by $R(x)_i = x_{-i}$ is not an element of $G$, but normalizes $G$ inside the self-homeomorphism group of $X$ (so $G$ has index $2$ in $\langle R, G \rangle$). The \emph{swap} is $s \in G$ defined by $s(x)_0 = x_1, s(x)_1 = x_0, \forall i \notin \{0,1\}: s(x)_i = x_i$. The \emph{$k$-CNOT} is $c^k \in G$ defined by $c^k(x)_0 = 1-x_0 \iff x_{[1,k]} = 1^k$ and $\forall i \neq 0: c^k(x)_i = i$. The map $c^0$ inverts the symbol at the origin, and is usually called the \emph{NOT} gate in the theory of logical gates. We also refer to it as the \emph{flip}. The map $c^1$ is linear, and is called the \emph{CNOT} or \emph{controlled NOT}. It is useful to keep in mind the identity
\begin{equation}
\label{eq:cliche}
c^1 \circ (\sigma^{-1} \circ (R \circ c^1 \circ R) \circ \sigma) \circ c^1 = s
\end{equation}
which states the swap in terms of CNOTs. The gate $c^2$ is called the \emph{Toffoli gate}.

We make some observations about subgroups generated by these elements. Write $\Sym_{<\infty}(\Z)$ for the group of finite-support permutations on $\Z$.

\begin{lemma}
\label{lem:Subgroups}
\[ \langle s, \sigma \rangle = \{ f \in G \;|\; \exists \pi \in \Sym_{<\infty}(\Z), n \in \Z: \forall i, x: f(x)_i = x_{\pi(i) + n} \} \]
\[ \langle c^0, \sigma \rangle = \{ f \in G \;|\; \exists c \in X_0, n \in \Z: \forall x: f(x) = \sigma^n(x) + c \} \cong \Z_2 \wr \Z \]
\[ \langle c^1, Rc^1R, \sigma \rangle = \{ f \in G \;|\; \forall x, y: f(x + y) = f(x) + f(y) \} \]
\[ \langle c^0, c^1, Rc^1R, \sigma \rangle = \{ f \in G \;|\; \exists c \in X_0: \forall x, y: f(x + y) = f(x) + f(y) + c \} \]
\end{lemma}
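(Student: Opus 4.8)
The plan is to prove the four characterizations in Lemma~\ref{lem:Subgroups} by establishing, for each, that the right-hand side is a subgroup of $G$ containing the listed generators (the ``easy'' containment $\subseteq$), and then that every element of the right-hand side can actually be written in terms of those generators (the ``hard'' containment $\supseteq$). In each case I would first verify the right-hand side really is a subgroup closed under composition and inverses, which amounts to routine checks: for the swap characterization, composing coordinate rearrangements gives another coordinate rearrangement; for the $\langle c^0, \sigma\rangle$ case, the affine translations $x \mapsto \sigma^n(x) + c$ with $c \in X_0$ clearly compose to maps of the same form and this manifestly gives the wreath product $\Z_2 \wr \Z$; for the two linearity characterizations, the set of linear (resp.\ affine, i.e.\ $f(x+y) = f(x)+f(y)+c$) homeomorphisms in $G$ is closed under composition and inversion by standard linear-algebra bookkeeping.

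For the generation direction I would treat the first and second lines quickly: in $\langle s, \sigma\rangle$, the shift provides all of $\Z$'s translation and $s$ generates the adjacent transposition at $\{0,1\}$, so conjugating $s$ by powers of $\sigma$ yields all adjacent transpositions, which generate $\Sym_{<\infty}(\Z)$; combined with $\sigma^n$ this gives exactly the stated ``permute-then-shift'' maps. For $\langle c^0, \sigma\rangle$, the flip $c^0$ toggles the origin, its $\sigma$-conjugates toggle arbitrary coordinates, and finitely many such toggles realize adding any $c \in X_0$, after which $\sigma^n$ supplies the shift part; uniqueness of the normal form $\sigma^n \circ f$ then identifies this with $\Z_2 \wr \Z$.

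The heart of the argument, and the main obstacle, is the third line: showing that every linear element of $G$ lies in $\langle c^1, Rc^1R, \sigma\rangle$. A general linear homeomorphism in $G$ is determined by a local rule that acts linearly on a finite window; I would argue that such a map, viewed on the subspace $X_0$, corresponds to an (infinite but locally finite, boundedly-supported) invertible matrix over $\mathbb{F}_2$ that is the identity outside a bounded band. The plan is to reduce any such map to the identity by Gaussian-elimination-style row operations, where the available operations are precisely the shifts of $c^1$ (which adds coordinate $1$ into coordinate $0$, i.e.\ a forward elementary transvection) and the shifts of $Rc^1R$ (the reversed transvection, adding coordinate $-1$ into coordinate $0$); together these give all elementary transvections $x_i \mathbin{{+}{=}} x_j$ for adjacent $i,j$, and composing adjacent ones yields transvections between any pair of coordinates. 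The delicate point is that this is Gaussian elimination over an infinite index set: I must clear the band-structured invertible matrix using only finitely many transvections while never introducing support outside a controlled region, and I must confirm that no determinant/sign obstruction arises (over $\mathbb{F}_2$ the special linear and general linear groups coincide, so transvections suffice to generate all band-limited invertible maps). Once this bounded Gaussian elimination is carried out carefully, the linear case follows.

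Finally, the fourth line reduces to the third: given an affine $f$ with $f(x+y) = f(x) + f(y) + c$, the map $x \mapsto f(x) + c$ (equivalently composing $f$ with the translation by $c$, which lies in $\langle c^0, \sigma\rangle \subseteq \langle c^0, c^1, Rc^1R, \sigma\rangle$ by the second line) is linear, hence in $\langle c^1, Rc^1R, \sigma\rangle$ by the third; reintroducing the translation by $c$ via $\sigma$-conjugates of $c^0$ then expresses $f$ itself, giving the affine characterization. Throughout, I expect the genuinely substantive work to be the band-limited Gaussian elimination of Step three, with everything else being structural bookkeeping justified by the normal form $\sigma^n \circ f$ and the linearity/affinity identities.
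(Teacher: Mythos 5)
Your proposal is correct and follows essentially the same route as the paper's (much terser) proof sketch: verify each right-hand side is a subgroup containing the generators, then generate it via decomposition into swaps, coordinate toggles, and elementary transvections, reducing the affine case to the linear one by subtracting the constant. The only cosmetic differences are that the paper obtains arbitrary transvections by conjugating $c^1$ with swaps built from \eqref{eq:cliche} whereas you compose adjacent ones, and your worry about ``infinite'' Gaussian elimination dissolves because the inert part of a linear element of $G$ acts by an element of $GL_{2R+1}(\mathbb{F}_2)$ on a finite window and is the identity elsewhere (Lemma~\ref{lem:LocalRule}).
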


\begin{proof}[Proof sketch]
In each case, it is straightforward to show that the generators are contained in the RHS, and that the RHS is a group. To see that all of the group is generated, for the first equality we need that every permutation can be decomposed into swaps. The second is clear, and this is a standard realization of the lamplighter group $\Z_2 \wr \Z$. The third follows from \eqref{eq:cliche} and the fact elementary matrices generate matrix groups. 
The RHS of the fourth equality is equivalent to $f$ being the sum of a linear map and a constant in $X_0$, and the subgroup $\langle c^0, \sigma \rangle$ allows addition of arbitrary constants.
\end{proof}

The first group is a standard example of a finitely-generated LEF group that is not residually finite \cite{CeCo10} ($G$ itself is also such an example).

The following subgroups are important in the classification of universal word swap gates, and we include a more complete treatment.

\begin{definition}
If $V \leq X_0$ is a shift-invariant vector space, let
\[ G_V = \{ f \in G \;|\; \exists n \in \Z, v \in X_0: \forall x: f(x) \in \sigma^n(x) + v + V \}. \]
Define
\[ G_R = \{ \sigma^n \circ f \;|\; f \in \ker(\alpha), n \in \Z, \exists (g_i)_{i \in \Z}: \forall x \in X: f(x)_i = g_i(x_{(-\infty, i]}) \]
where the $g_i$ are functions whose type is unambiguous from context. Define $G_L = R \circ G_R \circ R$
\end{definition}

Informally, $G_V$ is the subgroup of $G$ for which the cosets of $V$ are a system of blocks of imprimitivity, and these cosets are permuted by an affine translation.
The interpretation of $G_R$ is that information can only travel to the right, though literally this only holds in the inert part.

\begin{example}
For all $k$ we have
$c^k \in G_L$ and $Rc^kR \in G_R$.
We have $c^0 \in G_V$ for any $V$, and if $\pi$ is the permutation $(00 \; 11)(01)(10)$ of $\{0,1\}^2$, in cycle notation, then the map $f(x) = x_{(-\infty,0)}.\pi(x_{[0,1]}) x_{[2, \infty)}$ is in $G_V$ where $V = \{x \in X_0 \;|\; \sum_i x_i \equiv 0 \bmod 2\}$.

This map $f \in G_v$ (which is $f_{00,11}$ in the notation of Section~\ref{sec:GeneratingByWS}), is not linear, but is affine. The map that performs the permutation $(000 \; 111)$ is in $G_{V'}$ for a proper subspace $V' < X_0$, but is not even affine. \qee
\end{example}

\begin{lemma}
Let $V < X_0$ be a shift-invariant proper subspace. Then $G_V, G_L, G_R$ are proper subgroups of $G$. 
\end{lemma}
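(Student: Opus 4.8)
The goal is to show that each of $G_V$ (for $V < X_0$ a proper shift-invariant subspace), $G_L$, and $G_R$ is a proper subgroup of $G$. Since each is defined as a set, my plan splits into two tasks for each group: first verify it is genuinely a subgroup, and second exhibit an element of $G$ not belonging to it. The first task is routine bookkeeping; the second, producing a witness to properness, is where the real content lies.

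Let me treat $G_V$ first. To see it is a subgroup, I would check closure under composition and inverse directly from the defining condition $f(x) \in \sigma^n(x) + v + V$: composing two such maps adds the shift exponents and the translation constants, and since $V$ is shift-invariant the coset structure is preserved (the $\sigma^n$ applied to the inner coset $v' + V$ lands in $\sigma^n(v') + V$, still a coset of $V$ because $V$ is shift-invariant). For properness, the key observation is that membership in $G_V$ forces a strong linearity-like constraint: $f$ must permute the cosets of $V$ by an affine translation, so in particular $f(x) + f(y) + f(x+y)$ always lies in $V$ (modulo the constant $v$). I would exhibit a gate whose nonlinearity escapes $V$. Concretely, since $V$ is a \emph{proper} subspace, I can pick a finite word $w \in X_0$ with $w \notin V$, and build a local permutation (for instance a suitable Toffoli-type gate acting on a window) whose ``nonlinear defect'' $f(x)+f(y)+f(x+y)$ takes the value $w$ for some choice of $x,y$; such a gate cannot lie in $G_V$. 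The Toffoli gate $c^2$ is the natural candidate, as it is the standard example of a non-affine reversible gate, and its nonlinear defect is a single-coordinate word; I would choose the window so that this escaping word is not in $V$.

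For $G_R$ and $G_L$, properness is more geometric and I expect this to be the main obstacle. The defining property of $G_R$ is that the inert part $f$ satisfies $f(x)_i = g_i(x_{(-\infty,i]})$, i.e.\ the output at coordinate $i$ depends only on coordinates at or to the left of $i$ — information flows only to the right. The witness to properness should be a gate that moves information strictly to the left, and the obvious candidate is $c^k \in G_L$ (or more simply the reversed CNOT $R c^1 R$, which writes into coordinate $0$ based on coordinate $-1$); I must argue such a gate cannot be expressed in the one-sided form required by $G_R$. The subtle point is that the one-sided-information condition is a property of the \emph{map}, not of a particular local rule, so I cannot simply inspect one representation. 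The plan is to find two points $x, y$ agreeing on $(-\infty, i]$ but with $f(x)_i \neq f(y)_i$, which directly contradicts the existence of the functions $g_i$; taking $f = Rc^1R$ and $i = 0$, two points differing only at a positive coordinate that controls the flip at $0$ will do. Properness of $G_L$ then follows immediately by the reversal symmetry $G_L = R \circ G_R \circ R$, applying the same argument to the mirror-image gate.

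The one genuine subtlety I want to flag is that $G_R$ is defined with a two-part normal form $\sigma^n \circ f$ where only the inert part $f$ is required to have the one-sided dependence, so when checking that $G_R$ is a subgroup I must confirm that conjugating a right-flowing inert map by a shift keeps it right-flowing (it does, since shifting merely relabels coordinates and preserves the $(-\infty, i]$ dependence structure uniformly) and that composition of two right-flowing inert maps is again right-flowing (the output at $i$ after composing depends on inputs at coordinates $\le i$ through an intermediate stage also supported on $(-\infty, i]$). Granting these closure facts, the properness witnesses above complete the argument for all three groups.
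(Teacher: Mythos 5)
Your treatment of $G_V$ is sound and takes a mildly different route from the paper: the paper exhibits a CNOT-type gate and checks directly on the point ${}^\omega 0.010^\omega$ that the required coset condition forces ${}^\omega 0.10^\omega \in V$, whereas you use the nonlinear defect $f(x)+f(y)+f(x+y)$ of the Toffoli gate, which for suitable $x,y$ equals a single-coordinate vector. Both arguments ultimately rest on the same fact, which you should state explicitly: it is \emph{shift-invariance} together with properness (not properness alone) that guarantees no single-coordinate vector lies in $V$, since the shifts of such a vector span $X_0$. Your phrase ``I would choose the window so that this escaping word is not in $V$'' suggests a choice is needed; in fact every window works, and without the shift-invariance observation no choice is justified.

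There are two genuine problems in your treatment of $G_R$. First, your subgroup verification lists closure under shift-conjugation and under composition but omits closure under \emph{inverses}, and this is the only non-trivial step: it is not obvious that if $f(x)_i$ depends only on $x_{(-\infty,i]}$ then the same holds for $f^{-1}$. The paper spends most of its $G_R$ argument on exactly this, building a rule sequence for $f^{-1}$ by induction on $i$, using the injectivity $g_i(ya)\neq g_i(yb)$ for $a\neq b$ (which itself needs justification, e.g.\ via the finite order of inert elements). Without this step the claim that $G_R$ is a subgroup is unproved. Second, your properness witness is wrong as instantiated: $Rc^1R$ writes into coordinate $0$ based on coordinate $-1$, so $Rc^1R(x)_0$ depends only on $x_{(-\infty,0]}$ and $Rc^1R$ \emph{is} an element of $G_R$ (as the paper's example notes, $Rc^kR\in G_R$ for all $k$). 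Your stated test --- two points agreeing on $(-\infty,0]$ with different images at $0$ --- cannot be passed by $Rc^1R$, since no positive coordinate controls its flip at $0$. The correct witness is $c^1$ itself (or the swap $s$, which the paper uses), whose value at $0$ is controlled by coordinate $+1$; with that substitution your argument goes through, and $G_L$ then follows by the reversal symmetry as you say.
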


\begin{proof}
First, we show $G_V$ is a subgroup. Let us call the vector $v$ in the definition the \emph{offset vector} and $n$ the \emph{shift}. Let $f \in G_V$ with offset vector $v$ and shift $n$. Then for any $x$, $f(x + V) \subset \sigma^n(x) + \sigma^n(V) + v + V = \sigma^n(x) + v + V$ by shift-invariance of $V$, thus $f(x + V) = \sigma^n(x) + v + V$ since $f$ is a homeomorphism and cosets of $V$ partition $X$, thus
\[ f(\sigma^{-n}(x + v) + V) = \sigma^n(\sigma^{-n}(x + v)) + v + V = x + V \]
for any $x$, thus $f^{-1}(x + V) = \sigma^{-n}(x) + \sigma^{-n}(v) + V$, thus $f^{-1}(x) \in \sigma^{-n}(x) + \sigma^{-n}(v) + V$ for all $x$, so $f^{-1} \in G_V$ with offset vector $\sigma^{-n}(v)$ and shift $-n$.

Suppose then $f, g \in G_V$, and let the offset vectors be $u, v$, and shifts $m, n$, respectively. Then $f \circ g(x) \in f(\sigma^n(x) + v + V) \in \sigma^{m+n}(x) + \sigma^m(v) + u + V$, so $f \circ g$ admits offset vector $\sigma^m(v) + u$ and shift $m+n$, thus $f \circ g \in G_V$.

To see that $G_V$ is a proper subgroup, observe that since $V$ is shift-invariant and $V \neq X_0$, $V$ does not contain the vector ${^\omega}0.10^{\omega}$. Consider the map $f(x.aby) = x.\pi(ab)y$ where $\pi : \{0,1\}^2 \to \{0,1\}^2$ is the permutation $\pi = (00)(10)(01 \; 11)$ in cycle notation. Since $f(0^\Z) = 0^\Z$ and $f$ is inert, if $f \in G_V$ then necessarily the offset vector $v$ in the definition would satisfy $v \in V$. But then $f({^\omega}0.010^\omega) = {^\omega}0.110^\omega \in {^\omega}0.010^\omega + V$ implies ${^\omega}0.10^{\omega} \in V$, a contradiction.

Next, we show $G_R$ is a subgroup. Let us call the sequence $(g_i)_{i \in \Z}$ appearing in the definition the \emph{rule sequence}. Let $\sigma^n \circ f \in G_R$ with rule sequence $(g_i)_{i \in \Z}$. Then as above, it has inverse $\sigma^{-n} \circ (\sigma^n \circ f^{-1} \circ \sigma^{-n})$ and we only need to find a rule sequence for $\sigma^n \circ f^{-1} \circ \sigma^{-n}$.

First, we show that $f^{-1}$ has a rule sequence. Observe that necessarily $g_i(y) = y_i$ for all but finitely many $i$ (since $f$ is inert). Observe then that we have $g_i(ya) \neq g_i(yb)$ for all $y \in \{0,1\}^{(-\infty, i-1]}$, $a \neq b$. There are several ways to see this: it follows from the fact that $f^m = \ID$ for some $m$ (since $f$ is inert), or from a direct counting argument using the fact $g_i(y) = y_i$ for large $|i|$.

Now, we can build, by induction, a sequence of functions $h_i' : \{0,1\}^{(-\infty, i]} \to \{0,1\}^{(-\infty, i]}$ such that $h_i'(g_i(y)) = y_i$ for all $y \in \{0,1\}^{(-\infty, i]}$, and the functions agree in the sense $h_{i+1}'(ya)_{(-\infty, i]} = h_i'(y)$ for $y \in \{0,1\}^{(-\infty, i]}, a \in \{0,1\}$: Pick $h_i = \ID$ for all large enough $-i$, and
$h_{i+1}'(ya) = h_i'(y)b$
where $b$ is the unique letter such that $g_{i+1}(h_i'(y)b) = a$. Now, $h_i(y) = h_i'(y)_i$ defines a rule sequence for $f^{-1}$.

Now,
\begin{align*}
\sigma^n f^{-1} \sigma^{-n}(x)_i &= f^{-1} \sigma^{-n}(x)_{i + n} 
= h_{i+n}(\sigma^{-n}(x)_{(-\infty,i+n]}) = h_{i+n}(x_{(-\infty, i]}).
\end{align*}
where we shift the domain of $h_{i+n}$ to $h_{i+n} : \{0,1\}^{(-\infty, i]} \to \{0,1\}$ with the obvious interpretation. Thus, $(h_{i+n})_{i \in \Z}$ is a rule sequence for $\sigma^n \circ f^{-1} \circ \sigma^{-n}$.

Suppose then that $\sigma^n \circ f, \sigma^m \circ f' \in G_V$ with $f, f'$ inert. The above paragraph shows that the set of inert elements having a rule sequence is closed under conjugation by the shift, so by $\sigma^m f' \sigma^n f = \sigma^{m+n} (\sigma^{-n} f' \sigma^n) f$
we only need to show that if $f'$ and $f$ have rule sequences, then so does $f' \circ f$. This follows from
\[ f'f(x)_i = g'_i(f(x)_{(-\infty, i]}) = g'_i(y) \]
where $y_j = g_j(x_{(-\infty, j]})$ for $j \leq i$. This expression only looks at the values of $x$ in $x_{(-\infty, i]}$, thus defines a rule sequence.

To see that $G_R$ is a proper subgroup, observe that the swap $s \notin G_R$.

The proof for $G_L$ is symmetric.
\end{proof}

\section{How to generate $G$ and $G_n$}

The following is essentially folklore in the theory of reversible circuits.

\begin{lemma}
\label{lem:GeneratingGn}
Let $n \geq 4$. Letting
\[ S = \{ (\sigma^{-m} \circ s \circ \sigma^m)_n \;|\; m \in [0, n-2] \}, \]
we have $G_n = \langle S, c^0_n, c^2_n \rangle$. In particular,
\[ G_n = \langle s_n, c^0_n, c^2_n, \sigma_n \rangle. \]
\end{lemma}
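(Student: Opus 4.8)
The plan is to prove the two displayed equalities together, identifying $\{0,1\}^n$ with the vector space $\mathbb{F}_2^n$ and working directly with the generators as concrete permutations of this set. The containment $\langle S, c^0_n, c^2_n\rangle \le G_n$ is the easy half: each generator is an even permutation once $n \ge 4$. Indeed $c^0_n$ flips one coordinate and so is a product of $2^{n-1}$ disjoint transpositions, $c^2_n$ is a product of $2^{n-3}$ transpositions, and each element of $S$ is a coordinate swap, a product of $2^{n-2}$ transpositions; all these counts are even for $n \ge 4$ (this is exactly where $n \ge 4$ is needed, since $c^2_n$ is a single transposition when $n = 3$). Hence the group they generate lies in $\Alt(\{0,1\}^n) = G_n$. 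The ``in particular'' statement will then follow at the end: by Lemma~\ref{lem:ConjugationThingie} we have $(\sigma^{-m}\circ s\circ\sigma^m)_n = \sigma_n^{-m}\circ s_n\circ\sigma_n^m$, so $S \subseteq \langle s_n,\sigma_n\rangle$, while conversely $s_n, c^0_n, c^2_n$ are even and $\sigma_n \in G_n$ for $n \ge 3$ by Lemma~\ref{lem:Sufficientn}; thus the four displayed generators generate exactly $G_n$ once the main equality is established.

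The substance is the reverse containment $G_n \le \langle S, c^0_n, c^2_n\rangle =: H$. First I would pass to a more symmetric generating set: conjugating $c^0_n$ and $c^2_n$ by the coordinate permutations realized in $\langle S\rangle$ (which is the full symmetric group $\Sym(\{0,\dots,n-1\})$ acting by permuting coordinates) yields the NOT gate on every wire and the Toffoli gate on every target--control triple. Next I would synthesize a CNOT: writing $N$ for the NOT on a control wire of the Toffoli, the product $c^2_n\circ(N\circ c^2_n\circ N)$ flips the target iff the first control is $1$, since the two quadratic conditions $x_ix_j$ and $x_i\bar x_j$ XOR to the linear condition $x_i$. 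Conjugating this CNOT around gives all elementary transvections, which generate $\mathrm{GL}(n,2)$; together with the coordinate permutations (permutation matrices) and the NOT gates (translations) this shows $H$ contains the full affine group $\mathrm{AGL}(n,2) = \mathbb{F}_2^n\rtimes\mathrm{GL}(n,2)$. Since $\mathrm{AGL}(n,2)$ is $2$-transitive on $\mathbb{F}_2^n$, the group $H$ is in particular primitive.

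To finish from primitivity I would show $H = \Alt(\{0,1\}^n)$. The clean route is to invoke that $\mathrm{AGL}(n,2)$ is a maximal subgroup of $\Alt(\{0,1\}^n)$ for $n \ge 3$: since $H$ lies between $\mathrm{AGL}(n,2)$ and $\Alt(\{0,1\}^n)$ and contains the Toffoli gate, which is not affine, maximality forces $H = \Alt(\{0,1\}^n) = G_n$. A more self-contained alternative is to exhibit a single $3$-cycle in $H$ and apply Jordan's theorem (a primitive group containing a $3$-cycle contains the alternating group), using the spare wires available for $n \ge 4$ as borrowed bits to build a small-support even permutation by an uncompute-style construction.

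I expect this last step to be the main obstacle, and the difficulty is structural: every gate built from these generators has support equal to a union of affine cosets and is therefore large, so a genuinely small-support element such as a $3$-cycle cannot be read off directly and must be engineered (a single $3$-cycle corresponds to a fully controlled operation, precisely the regime where borrowed bits are unavoidable). The cleanest rigorous completion is thus the maximality of the affine group, a standard but nontrivial fact from permutation group theory; everything else (evenness, the CNOT synthesis, the generation of $\mathrm{AGL}(n,2)$, and the passage to the ``in particular'' form) is routine verification.
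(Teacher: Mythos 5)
Your proof is correct, but it takes a genuinely different route from the paper's. The paper's proof of the hard containment is a one-step reduction: it invokes the folklore theorem of reversible circuit theory that the NOT and Toffoli gates, applied to arbitrary wire triples, generate every even permutation of $\{0,1\}^n$ (with citations), and observes that the adjacent swaps in $S$ generate all coordinate permutations, which is exactly what is needed to move the fixed gates $c^0_n, c^2_n$ onto arbitrary wires; the easy containment is delegated to Lemma~\ref{lem:Sufficientn}, whose proof is the same transposition count you perform, and the ``in particular'' clause is handled by the same conjugation observation you make. You instead build up the group from below: your CNOT synthesis $c^2_n\circ(N\circ c^2_n\circ N)$ is correct (the two quadratic flip-conditions XOR to a linear one), this does yield all transvections and hence $\mathrm{AGL}(n,2)\leq H$, and the final appeal to maximality of $\mathrm{AGL}(n,2)$ in $\Alt(\{0,1\}^n)$ for $n\geq 3$, combined with the non-affineness of the Toffoli gate, is a valid way to close the argument. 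The trade-off is worth noting: you have exchanged one black box for a much heavier one. The folklore gate-universality theorem has an elementary, constructive proof and is precisely the form of the statement the rest of the paper needs (it is what produces the explicit circuits in the appendices), whereas maximality of the affine group in the alternating group rests on the classification of $2$-transitive permutation groups, hence ultimately on the classification of finite simple groups. Your own diagnosis is accurate: the $3$-cycle/Jordan alternative you sketch as the ``self-contained'' option is, once carried out with borrowed bits, essentially a proof of the folklore theorem the paper cites.
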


\begin{proof}
The maximal minimal strong shift-invariant radius among gates in $S \cup \{c^0, c^2\}$ is $1$ (for $c^2$), so the expressions are well-defined and we have $S \cup \{c^0_n, c^2_n\} \subset G_n$ by Lemma~\ref{lem:Sufficientn}.

It is known that every even permutation on $\{0,1\}^n$ can be written as a finite composition of applications of the NOT gate and Toffoli gate when we are allowed to apply these gates to arbitrary $3$-tuples of wires (this is the folklore part, see e.g. \cite{BoKaSa16,Xu15}). The swaps in $S$ allow arbitrary permutations of the coordinates, so we can conjugate any $3$-tuple of coordinates to the support $\{0, 1, 2\}$ of the Toffoli gate $c^2_n$, and similarly we can apply the NOT gate in any coordinate. 

For the second claim, observe that the gates in $S$ are conjugate to $s_n$ by powers of $\sigma_n$.
\end{proof}

\begin{lemma}
\label{lem:Classical}
\[ G = \langle c^2, s, c^0, \sigma \rangle = \langle c^2, c^1, R c^1 R, c^0, \sigma \rangle \]
\end{lemma}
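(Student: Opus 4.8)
The plan is to prove the first equality $G = \langle c^2, s, c^0, \sigma\rangle =: H$ and then obtain the second as a short consequence of identity~\eqref{eq:cliche}. Since $H \leq G$ is immediate (all four generators are shifts or inert elements of $G$), and since $G$ is generated by $\sigma$ together with its inert elements, it suffices to show that every inert $f \in G$ lies in $H$. Conjugating by a suitable power of $\sigma$ (which is harmless, as $\sigma \in H$ and conjugation preserves $H$-membership), I may assume the local rule of $f$ lives in a small window $[0, 2R]$, where $R$ is the strong shift-invariant radius.

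To place such an $f$ in $H$, I would transfer the finite statement Lemma~\ref{lem:GeneratingGn} back to $\Z$ through the LEF machinery of Section~\ref{sec:ftofn}. Fix $n$ large. By Lemma~\ref{lem:Sufficientn} we have $f_n \in G_n$, so the first form $G_n = \langle S, c^0_n, c^2_n\rangle$ of Lemma~\ref{lem:GeneratingGn} lets me write $f_n = g^1_n \circ \cdots \circ g^k_n$, where each $g^i \in H$ is one of $c^0$, $c^2$, or a conjugate $\sigma^{-m} \circ s \circ \sigma^m$ with $m \in [0, n-2]$ — crucially all \emph{inert}, with windows contained in $[0, n-1]$. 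Let $F = g^1 \circ \cdots \circ g^k \in H$ be the corresponding lift, which is again inert. All the $\ell_i$ in Lemma~\ref{lem:WeakLEF} vanish, so $t = 0$ and the windows already lie in the common block $[0, n-1]$; hence the no-wrapping hypothesis holds and the lemma yields $F_n = g^1_n \circ \cdots \circ g^k_n = f_n$. Since both $F$ and $f$ are inert elements whose local rules sit (with room to spare) inside a block of length $n$, the map $g \mapsto g_n$ is injective on this class: by Remark~\ref{rem:Formula} each $g_n$ is literally the permutation that $g$'s local rule induces on a length-$n$ block, so $F_n = f_n$ forces $\hat F = \hat f$ and therefore $F = f$. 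Thus $f = F \in H$, proving $G = H$. The second equality then follows at once: $c^1, R c^1 R, c^0, c^2$ are inert, so $\langle c^2, c^1, R c^1 R, c^0, \sigma\rangle \leq G$, while \eqref{eq:cliche} exhibits $s$ as a word in $c^1, R c^1 R, \sigma$, giving $H = \langle c^2, s, c^0, \sigma\rangle \leq \langle c^2, c^1, R c^1 R, c^0, \sigma\rangle \leq G = H$.

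The main obstacle I anticipate is the injectivity step, which hides a genuine locality requirement. The word produced by Lemma~\ref{lem:GeneratingGn} for $f_n$ may, a priori, involve swaps spread across the whole range $[0, n-1]$, so the lift $F$ could have support nearly as large as the block, leaving no room and breaking the clean identification $g_n = \hat g$. The fix is to observe that the folklore circuit realizing a permutation supported on the bounded window $[0, 2R]$ needs only boundedly many neighbouring coordinates as scratch wires, so the realizing word can be taken with support in a fixed block $[0, M]$ whose size depends on $R$ but not on $n$; choosing $n > 2M + 1$ then legitimizes the injectivity argument. Equivalently, one must check that the infinitely many surrounding wires are used only as \emph{borrowed bits} returned to their initial state within a bounded neighbourhood, so that no information is passed around the cycle and the comparison $F_n = f_n$ genuinely certifies $F = f$ in $G$.
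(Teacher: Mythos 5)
Your proposal is correct in substance and lands on the same two pillars as the paper's proof --- Lemma~\ref{lem:GeneratingGn} plus the observation that its generators never carry information across the block boundary, with parity handled by a borrowed bit --- but you route the argument through the $f \mapsto f_n$ machinery and Lemma~\ref{lem:WeakLEF}, whereas the paper argues directly in $G$. The obstacle you flag at the end is real: as written, the word that Lemma~\ref{lem:GeneratingGn} produces for $f_n$ may have lift $F$ with strong support filling essentially all of $[0,n-1]$, in which case $F_n$ is not even well-defined by Definition~\ref{def:ftofn} (one needs $n > 2R_F+1$) and the identification of $F_n$ with the permutation $\hat F$ induces on a block breaks down. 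Your fix --- realize the permutation by a circuit supported in a fixed block $[0,M]$ with $M$ depending only on $R$, using one borrowed bit to make the induced permutation of $\{0,1\}^{M+1}$ even --- is exactly right, and is obtained by applying Lemma~\ref{lem:GeneratingGn} with parameter $M+1 = 2R+2$ rather than with the large $n$. But note that once this is done the whole detour through $G_n$ and the injectivity of $g \mapsto g_n$ becomes redundant: $F$ and $f$ are then both inert, both the identity outside $[0,M]$, and both induce the same permutation of the window, so $F = f$ immediately. That direct comparison is precisely the paper's proof (``none of the generators \dots send information over the borders of the interval, so the action of the subgroup \dots precisely simulates the action of $G_n$''). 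What your version buys is a uniform template that also powers Theorem~\ref{thm:RingCase}; what the paper's version buys is brevity and the avoidance of the well-definedness issue for $F_n$. Your handling of the second equality via \eqref{eq:cliche} matches the paper.
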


\begin{proof}
It is enough to show that the local permutations are generated, and by conjugating with powers of $\sigma$ (which is in both claimed generating sets), it is enough to show that those with strong support contained in $[0,n-1]$ are generated, for arbitrarily large $n$.


By the previous lemma, the leftmost generating set can perform every even permutation of the word on the interval $[0,n-1]$ for $n \geq 4$: none of the generators of $G_n$ in the first generating set in the Lemma~\ref{lem:GeneratingGn} send information over the borders of the interval $[0,n-1]$, so the action of the subgroup generated by
\[ \{\sigma^{-m} \circ s \circ \sigma^m \;|\; m \in [0, n-2]\} \cup \{ c^0, c^2 \} \]
precisely simulates the action of $G_n$ of the corresponding generators in the lemma. 

As in Lemma~\ref{lem:Sufficientn}, we see that every permutation of $\{0,1\}^n$ is even when seen as a permutation of $\{0,1\}^{n+1}$ where the rightmost bit is never changed or looked at (i.e. it is a \emph{borrowed bit} in the terminology of \cite{Xu15}), so the leftmost generating set indeed generates all of $G$.

The second equality follows from \eqref{eq:cliche}.
\end{proof}

In logical gates contexts where wire swaps are available for free, a commonly used gate set is $\{c^0, c^1, c^2\}$, known as the NCT library (NOT, Controlled NOT, Toffoli gate), and corresponds to the rightmost generating set $c^2, c^1, R c^1 R, c^0, \sigma$ above (though we need two versions of $c^1$). The gate $c^1$ is usually included as it is cheaper than $c^2$ in physical implementations, but for the purpose of theory it can simply be expressed in terms of $c^0$ and $c^2$, and thus need not be included when $s$ is included, giving the leftmost generating set $c^2, s, c^0, \sigma$.


When the wires are organized into a finite cycle, any universal set of gates for $G$ gives one for the alternating group $G_n$.

\begin{theorem}
\label{thm:RingCase}
Let $F \subset G$ be any finite generating set. Then for large enough $n$, $\{f_n \;|\; f \in F\}$ generates $G_n$.
\end{theorem}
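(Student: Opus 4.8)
The plan is to reduce to the concrete generating set of Lemma~\ref{lem:Classical} and then transfer relations from $G$ to $G_n$ using the weak LEF property of Lemma~\ref{lem:WeakLEF}. Recall that by Lemma~\ref{lem:Classical} the four gates $c^2, s, c^0, \sigma$ generate $G$, while by Lemma~\ref{lem:GeneratingGn} their projections $c^2_n, s_n, c^0_n, \sigma_n$ generate $G_n$ for every $n \geq 4$. Since $F$ is assumed to generate $G$, each of these four standard generators can be written as a finite word in the elements of $F$ and their inverses; I would fix such expressions $\gamma = f^1 \circ \cdots \circ f^{k_\gamma}$ for $\gamma \in \{c^2, s, c^0, \sigma\}$, with each factor $f^j \in F \cup F^{-1}$ (discarding $\ID$ from $F$ at the outset, as it contributes nothing, so that every factor is a nonidentity element as required later).

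The core step is to push these four identities forward to $G_n$. Only finitely many group elements occur across the four words—namely the elements of $F$ together with their inverses—so there is a common bound $R$ on their strong shift-invariant radii, a common bound on their shift data $m_i$, and a common bound on the word lengths, hence on the quantity $t = \sum_j |\ell_j|$ appearing in Lemma~\ref{lem:WeakLEF}. Consequently, for all sufficiently large $n$ one can, simultaneously for each of the four words, find an interval $[h, h+n-1]$ containing every $[m_i - t - R_i, m_i + t + R_i]$, so that Lemma~\ref{lem:WeakLEF} applies and yields $\gamma_n = f^1_n \circ \cdots \circ f^{k_\gamma}_n$ for each $\gamma$. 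For the factors of the form $(f^{-1})_n$ with $f \in F$, applying Lemma~\ref{lem:WeakLEF} to the relation $f \circ f^{-1} = \ID$ gives $(f^{-1})_n = (f_n)^{-1}$, which lies in $\langle \{f_n \mid f \in F\}\rangle$; the same subgroup therefore contains each $\gamma_n$.

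Combining the pieces finishes the argument. For $n$ large enough—at least $4$, exceeding $2R+1$ so that every $f_n$ is defined, and above the threshold extracted from Lemma~\ref{lem:WeakLEF} in the previous paragraph—the subgroup $\langle \{f_n \mid f \in F\}\rangle$ contains $s_n, c^0_n, c^2_n, \sigma_n$, hence contains all of $G_n$ by Lemma~\ref{lem:GeneratingGn}. Conversely, each $f_n$ lies in $G_n$ by Lemma~\ref{lem:Sufficientn}, so the generated subgroup is contained in $G_n$, giving equality.

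I expect the main obstacle to be bookkeeping rather than conceptual: one must verify that a \emph{single} threshold on $n$ works for all four transfer identities at once, i.e.\ that the finitely many words expressing $c^2, s, c^0, \sigma$ in terms of $F$ never force information ``around the cycle'' once $n$ exceeds the combined radius-plus-shift budget. The one slightly indirect point is the treatment of the inverse factors $(f^{-1})_n$, which is handled cleanly by invoking Lemma~\ref{lem:WeakLEF} on the relation $f \circ f^{-1} = \ID$ rather than by appealing to functoriality of $f \mapsto f_n$ (which is not a homomorphism globally).
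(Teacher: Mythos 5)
Your proposal is correct and follows essentially the same route as the paper: express the standard generators $s, c^0, c^2, \sigma$ as words in $F$, transfer these identities to $G_n$ for large $n$ via Lemma~\ref{lem:WeakLEF}, and conclude with Lemma~\ref{lem:GeneratingGn} (and Lemma~\ref{lem:Sufficientn} for the reverse containment). Your explicit treatment of inverse factors via $f \circ f^{-1} = \ID$ is a detail the paper's proof glosses over, but it is the same argument.
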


\begin{proof}
It is enough to show that $\{f_n \;|\; f \in F\}$ generates some generating set of $G_n$. By Lemma~\ref{lem:GeneratingGn}, it is enough to generate all of $S$ and the gates $c^0_n, c^2_n$. Thus it is enough to generate the standard generating set $\{s_n, c^0_n, c^2_n, \sigma_n\}$.

The gate $s$ can be written as a finite composition of elements in $F$, i.e. $s = f^1 \circ \cdots \circ f^k$ for some $f^1,...,f^k \in F$. By Lemma~\ref{lem:WeakLEF}, for $n$ large enough we then have $s_n = f^1_n \circ \cdots \circ f^k_n$. The same argument applies to the gates in $\{c^0, c^2, \sigma\}$, so for large enough $n$ we indeed have
\[\{s_n, c^0_n, c^2_n, \sigma_n\} \subset \langle f_n \;|\; f \in F \rangle, \]
concluding the proof.
\end{proof}



The crucial point in the proof was that $G_n$ has essentially a single finite generating set that does not depend on $n$, so it is enough to write the formulas generating this generating set, and these formulas then work for large $n$.

We do not know whether the converse of Theorem~\ref{thm:RingCase}, e.g., whether the fact $\{f_n \;|\; f \in F\}$ and the shift together generate $G_n$ for all large enough $n$ implies that $F$ and $\sigma$ generate $G$. It could be the case that some set $F$ is only able to generate the standard generating set for all $n$ by sending information around the cycle. 





\section{Generating $G$ by word-swapping}
\label{sec:GeneratingByWS}

For words $u, v \in \{0, 1\}^n$, write $f_{u,v}$ for the map $f_{u,v}(x)_{[0,n-1]} = v$ if $x_{[0,n-1}] = u$, $f_{u,v}(x)_{[0,n-1]} = u$ if $x_{[0,n-1}] = v$, $f_{u,v}(x)_{[0,n-1]} = x_{[0,n-1]}$ otherwise, and $f_{u,v}(x)_i = x_i$ for $i \notin [0,n-1]$. For example the Toffoli gate is $c^2 = f_{011,111}$.

We give a classification of the pairs of words $u, v$ such that $f_{u, v}$ is a universal gate together with the shift and the flip, i.e. $\langle f_{u, v}, \sigma, c^0 \rangle = G$. The characterization is that the words differ in exactly one place, which is not at the border.

\begin{theorem}
\label{thm:TwoWords}
Let $u', v' \in \{0,1\}^n$ be distinct words and let $G' = \langle c^0, f_{u',v'}, \sigma \rangle$. Then $G = G'$ if and only if $D(u', v') \in 0^*0100^*$.
\end{theorem}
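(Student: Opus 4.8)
The plan is to prove both implications separately: the forward (necessity) direction by exhibiting a proper subgroup of Section~\ref{sec:Subgroups} that contains all three generators, and the backward (sufficiency) direction by a constructive reduction to the generating set of Lemma~\ref{lem:Classical}.

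For necessity, suppose $D(u',v') \notin 0^*0100^*$. Since $u' \neq v'$, the difference set is nonempty, so two cases remain. If $|D(u',v')| \geq 2$, let $w = d(u',v') \in X_0$ be the difference vector supported on $[0,n-1]$ and let $V$ be the shift-invariant subspace it generates. Identifying $X_0$ with $\mathbb{F}_2[t,t^{-1}]$, $V$ is the submodule $(w)$, which is proper precisely because $w$ is not a monomial, i.e. because $|D(u',v')| \geq 2$. Since $f_{u',v'}(x) - x \in \{0,w\} \subseteq V$ for every $x$, we get $f_{u',v'} \in G_V$, and $c^0, \sigma \in G_V$ as well, so $G' \leq G_V < G$. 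If instead $D(u',v')$ is a single border coordinate, say only position $0$, then $f_{u',v'}$ flips the bit at $0$ as a function of $x_{[0,n-1]}$ and fixes every other coordinate; reflecting, $R f_{u',v'} R \in G_R$, so $f_{u',v'} \in G_L$, and with $c^0, \sigma \in G_L$ this gives $G' \leq G_L < G$. The coordinate $n-1$ is symmetric via $G_R$.

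For sufficiency, assume $u', v'$ differ in exactly one interior coordinate $j$, so $0 < j < n-1$. Conjugating $f_{u',v'}$ by the flips $\sigma^{-p} \circ c^0 \circ \sigma^{p}$ on the coordinates $p \in [0,n-1]\setminus\{j\}$ where $u'$ has a $0$, I first normalize to the gate $g \in G'$ that flips the bit at $j$ if and only if all other coordinates of the window $[0,n-1]$ equal $1$; this is an $(n-1)$-controlled NOT with target $j$ and control set $C = [0,n-1]\setminus\{j\}$. The key step is a control-elimination identity: for any $p \in C$, writing $c^0_p = \sigma^{-p} \circ c^0 \circ \sigma^{p}$, the composite $g \circ (c^0_p \circ g \circ c^0_p)$ flips the bit at $j$ if and only if all coordinates of $C\setminus\{p\}$ equal $1$ — that is, it is the same gate with the control at $p$ removed. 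This is immediate, since both factors flip $j$ while only reading the controls (so they commute), and their two triggering conditions differ only in the value demanded at $p$, so their symmetric difference drops the condition on $p$. Iterating this identity inside $G'$, I can delete controls one at a time.

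Because $j$ is interior, $C$ contains both the adjacent coordinate $j-1$ on the left and $j+1$ on the right. Deleting all controls except $j+1$ and conjugating by the appropriate power of $\sigma$ produces $c^1$ (an adjacent CNOT with control to the right of its target); deleting all but $j-1$ produces $Rc^1R$ (an adjacent CNOT with control to the left); deleting all but $\{j-1,j+1\}$ leaves a two-control Toffoli. Now \eqref{eq:cliche} gives $s \in G'$, the subgroup $\langle s, \sigma\rangle$ supplies all finite coordinate permutations (Lemma~\ref{lem:Subgroups}), and conjugating the two-control Toffoli into standard position yields $c^2 \in G'$. Hence $G' \supseteq \langle c^2, s, c^0, \sigma\rangle = G$ by Lemma~\ref{lem:Classical}, so $G' = G$. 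The main obstacle is this sufficiency direction, and in particular the control-elimination identity together with the observation that interiority of $j$ is exactly what lets the construction produce CNOTs of \emph{both} orientations; a border coordinate would leave controls on only one side, trapping the gate inside $G_L$ or $G_R$, in precise agreement with the necessity analysis. Once both orientations and any Toffoli are available, reaching the generators of Lemma~\ref{lem:Classical} is routine.
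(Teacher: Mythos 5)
Your proof is correct and takes essentially the same route as the paper: necessity via the obstructions $G_V$, $G_L$, $G_R$, and sufficiency via the flip-conjugation control-elimination identity, yielding both CNOT orientations and a Toffoli and finishing with \eqref{eq:cliche} and Lemma~\ref{lem:Classical}. The only differences are cosmetic (you normalize to all-ones controls rather than to $u=0^n$ and argue properness of $V$ via units of $\mathbb{F}_2[t,t^{-1}]$ rather than by the support-cardinality count), so no further comparison is needed.
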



\begin{proof}
For sufficiency, suppose the RHS holds. Since $c^0, \sigma \in G'$, we can conjugate $u'$ to $u = 0^n$ and $v'$ to $v = D(u', v')$. By applying the gate $f_{0^n, v}$, flipping the $i$th bit on the tape, applying the gate again, and again flipping the $i$th bit, we effectively eliminate one of the bits that $f_{0^n, v}$ looks at. In symbols,
\[ \sigma c^0 f_{0^n, 0v} c^0 f_{0^n, 0v} \sigma^{-1} = f_{0^{n-1},v} \]
and
\[ \sigma^{-n} c^0 \sigma^n f_{0^n, v0} \sigma^{-n} c^0 \sigma^n f_{0^n, v0} = f_{0^{n-1},v}. \]

By the assumption, $v$ contains the subwords $01$ and $10$, so $c^1$ and $Rc^1R$ are in $G'$ by applying the formulas of the previous paragraph, eliminating all but the subword $01$ (resp. $10$) of $v$. Then by \eqref{eq:cliche}, also $s \in G'$. 

Now, use the same trick to eliminate all but the subword $010$ from $v$ to obtain $f_{000,010} \in G'$. This can be conjugated to the Toffoli gate $c^2$ by applying suitable shifts of the swap and the flip. It then follows from Lemma~\ref{lem:Classical} that $G' = G$.

As for necessity, if $D(u', v') \in 0^*$, then $f_{u', v'}$ is the trivial gate. If $D(u', v') \in 0^*1$, then $\{\sigma, c^0, f_{u', v'}\} \subset G_R$. If $D(u', v') \in 10^*$, then $\{\sigma, c^0, f_{u', v'}\} \subset G_L$.

If $D(u', v')$ contains more than one $1$, then consider $w = {^\omega 0}.D(u', v')0^\omega$ as a vector in $X_0$. Together with its shifts, it generates a shift-invariant vector space $V \leq X_0$. It does not generate all of $X_0$: one way to see this is that the support of a sum of distinct shifts of $w$ has cardinality at least $2$, since the leftmost coordinate of the leftmost summand and the rightmost coordinate of the rightmost summand are not cancelled. It is straightforward to show that $\{c^0, f_{u',v'}, \sigma\} \subset G_V$.
\end{proof}

In particular the previous theorem shows that $c^0, c^2, \sigma$ do not generate $G$, since the Toffoli gate $f_{011, 111}$ satisfies $D(011, 111) = 100 \notin 0^*0100^*$ (and the reason for this is that $c^2 \in G_L$).

One cannot omit $c^0$ from the theorem, since we can never have $G = \langle f_{u,v}, \sigma \rangle$ for the simple reason that either $0^\Z$ or $1^\Z$ is a fixed point for both generators when $D(u, v) \in 0^*0100^*$, and when $D(u, v) \notin 0^*0100^*$, even $\{f_{u,v}, c^0, \sigma\}$ does not generate $G$ by the theorem.

\section{Asynchronous application of elementary CA}

We can now find a single inert element that generates $G$ together with the shift, i.e a \emph{singleton universal gate set}. This is an optimal way to generate $G$ in several ways: we need only one gate, which only modifies one cell at a time, and which has strong radius $r = 1$. This radius is optimal since all permutations of two or fewer bits are affine.

We recall the \emph{Wolfram number} \cite{Wo83} of an elementary cellular automaton. Enumerate words $\{0,1\}^3$ in reverse lexicographic order as $w_0 = 111, w_1 = 110, w_2 = 101, ...$. To each $b \in \{0,1\}^8$, we associate a self-homeomorphism $e^b$ of $X$ defined by $e^b(x)_0 = b_j$ where $w_j = x_{[-1,1]}$, and $e^b(x)_i = x_i$ for $i \neq 0$. This number $b$ is then written in decimal.
We show that the singleton universal gate sets among $e^n$ are precisely $e^{57} = e^{00111001_2}$ and $e^{99} = e^{01100011_2}$.

Usually, a cellular automaton applies the local rule to all cells of $\Z$ at once. The definition here is the definition of (fully) asynchronous application of the cellular automaton, as studied in \cite{MaMcMo11,Vi12}.

The map $e^{57}$ can be described as follows: $e^{57}(x)_0 = 1-x_0$ unless $x_{-1} = 0$ and $x_1 = 1$, and no other cell is changed. Thus, $e^{57} = c^0 \circ \sigma \circ f_{001, 011} \circ \sigma^{-1}$. The map $e^{99}$ is its mirror image.

\begin{theorem}
\label{thm:Main}
The function $e^n$ is a universal gate in $G$ if and only if $n \in \{57, 99\}$.
\end{theorem}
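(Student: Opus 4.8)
The plan is to split into the easy sufficiency direction for the two special rules and the harder necessity direction ruling out all other $n$. For sufficiency, I would start from the identity already given in the text, $e^{57} = c^0 \circ \sigma \circ f_{001,011} \circ \sigma^{-1}$. Conjugating by $\sigma$ shows $c^0 \circ f_{001,011} \in \langle e^{57}, \sigma\rangle$, and since $D(001,011) = 010 \in 0^*0100^*$, the gate $f_{001,011}$ is exactly of the universal type classified in Theorem~\ref{thm:TwoWords}. The remaining task is therefore to extract both the bit flip $c^0$ and the clean word swap $f_{001,011}$ separately from the single composite generator $c^0 \circ \sigma \circ f_{001,011} \circ \sigma^{-1}$ together with its shifts. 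I would do this by composing shifts of $e^{57}$ so that the word-swap parts act on disjoint coordinates (hence cancel or commute) while the $c^0$ parts accumulate, isolating a pure flip; once $c^0$ is in hand, $f_{001,011}$ follows immediately from the defining identity. Then $\langle e^{57},\sigma\rangle \supseteq \langle c^0, f_{001,011}, \sigma\rangle = G$ by Theorem~\ref{thm:TwoWords}, and the case $n = 99$ is identical by the mirror symmetry $R$ (which normalizes $G$), so $e^{99}$ is universal as well.

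For necessity I would argue that every other $e^n$ lies in one of the proper subgroups identified in Section~\ref{sec:Subgroups}. Each $e^n$ is an inert element supported on a single cell (the origin), determined by a Boolean function $\{0,1\}^3 \to \{0,1\}$ giving the new value of $x_0$ in terms of $x_{[-1,1]}$; for $e^n$ to even be a bijection (hence lie in $G$), the rule must flip $x_0$ on a set of neighborhood patterns that is independent of $x_0$ itself, i.e. the map must be an involution toggling the center bit according to a predicate $P(x_{-1}, x_1)$. This cuts the $256$ Wolfram numbers down to the $16$ genuinely reversible single-cell rules, parametrized by which of the four patterns $(x_{-1},x_1) \in \{00,01,10,11\}$ trigger a flip. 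The core of the proof is then to check that each such rule other than $57$ and $99$ falls into $G_L$, $G_R$, $\langle c^0,\sigma\rangle$ (the affine-translation/lamplighter subgroup), or $G_V$ for a proper shift-invariant $V$, using the containment criteria established in Lemma~\ref{lem:Subgroups} and the lemma on $G_V, G_L, G_R$.

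Concretely, I would organize the $16$ reversible rules by their triggering predicate $P$: if $P$ depends only on $x_1$ (information flows one way) the rule sits in $G_L$, if only on $x_{-1}$ it sits in $G_R$; if $P$ is constant the rule is an affine translation lying in $\langle c^0,\sigma\rangle$; and the linear and affine-nonuniversal cases (where $P$ is a sum, so $e^n$ preserves a proper shift-invariant coset structure) land in some $G_V$ via the same offset-vector computation used in the example after the $G_V$ definition. The symmetric pair that escapes all four subgroups is precisely the predicate ``flip unless $x_{-1}=0, x_1=1$'' and its mirror, i.e. $57$ and $99$. The main obstacle I anticipate is the necessity bookkeeping: verifying that the enumeration of reversible single-cell rules is complete and that each non-universal rule is correctly and exhaustively assigned to a containing proper subgroup, rather than any deep new idea. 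The sufficiency extraction of $c^0$ from $e^{57}$ is a short explicit computation, and the subgroup machinery needed for necessity is entirely in place from Section~\ref{sec:Subgroups}.
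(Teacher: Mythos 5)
Your overall architecture matches the paper's --- reduce sufficiency to producing $c^0$ from shifts of $e^{57}$, after which $f_{001,011}=\sigma^{-1}c^0e^{57}\sigma$ and Theorem~\ref{thm:TwoWords} finish the job, and prove necessity by a case analysis over the $16$ reversible single-cell rules --- but both halves have a genuine gap at the decisive step. For sufficiency, your mechanism for isolating $c^0$ (``compose shifts of $e^{57}$ so that the word-swap parts act on disjoint coordinates while the $c^0$ parts accumulate'') cannot work: the conjugates $\sigma^{-i}e^{57}\sigma^i$ and $\sigma^{-j}e^{57}\sigma^j$ with $|i-j|\geq 2$ commute outright (each modifies only its own cell and reads only cells the other does not modify), and each is an involution, so products of pairwise-distant conjugates generate an elementary abelian $2$-group in which every element still carries its conditional part and no pure flip ever appears. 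The flip can only be extracted by exploiting the non-commutation of \emph{adjacent} conjugates, and this is precisely the nontrivial content of the paper's proof: an explicit, computer-found and computer-verified word of length $50$ in the three conjugates $\sigma e^{57}\sigma^{-1},\ e^{57},\ \sigma^{-1}e^{57}\sigma$ equal to $c^0$. Your sketch contains no substitute for that identity.

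For necessity, your reduction to $16$ rules indexed by a predicate $P(x_{-1},x_1)$ (``flip $x_0$ iff $P$'') is correct, but the four buckets you propose ($G_L$, $G_R$, the lamplighter $\langle c^0,\sigma\rangle$, and $G_V$) do not cover all $14$ non-universal rules. For example the rule with $P=x_{-1}\wedge x_1$ reads both neighbours (so lies in neither $G_L$ nor $G_R$), is not affine, and lies in no $G_V$ with $V$ proper: for an inert $f$ fixing $0^\Z$ the $G_V$ condition forces $f(x)+x\in V$ for all $x$, here $f(x)+x$ can equal ${^\omega}0.10^{\omega}$, and any shift-invariant subspace containing that vector is all of $X_0$. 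Six of the sixteen predicates (the four conjunctions of literals and the two disjunctions other than those giving $e^{57}$ and $e^{99}$) escape all your buckets in this way. They are all excluded by a simpler obstruction your proposal omits and which the paper uses first: each such rule fixes $0^\Z$ or $1^\Z$, as does $\sigma$, while $G$ stabilizes neither point; this is how the paper cuts the $16$ candidates down to $e^{51},e^{57},e^{99},e^{105}$ before invoking any subgroup from Section~\ref{sec:Subgroups}. (A further small misattribution: the linear/affine rules such as $e^{105}$ also fail to lie in any proper $G_V$ by the same argument; they are ruled out because they lie in the proper affine subgroup of Lemma~\ref{lem:Subgroups}.)
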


In particular, $G = \langle e^{57}, \sigma \rangle = \langle e^{99}, \sigma \rangle$ is $2$-generated.

\begin{proof}
Consider general $e^n, n \in \{0,1,...,255\}$. For $e^n$ to be in $G$, the binary representation of $n$ must be of the form
\[ n = (ab(1-a)(1-b)cd(1-c)(1-d))_2. \]
If $d = 1$, then $0^\Z$ is fixed, and similarly we must have a = 0, so for universality we must have
$n = (0 b 1 (1-b) c 0 (1-c) 1)_2$,
so the choices are among $00110011, 00111001, 01100011, 01101001$. These are $e^{51}, e^{57}, e^{99}, e^{105}$ respectively. We have $e^{51} = c^0$, so it is not a singleton universal gate set by Lemma~\ref{lem:Subgroups}. The gate $e^{105}$ sums the bits at $x_{-1}, x_1$ to the bit at $x_0$, and then flips the value at $x_0$. This gate is thus affine, thus cannot be universal. We are left with only the choices $e^{57}$ and $e^{99}$.

Let us show that $e^{57}$ is universal, i.e. $\langle e_{57}, \sigma \rangle = G$.
By Theorem~\ref{thm:TwoWords}, it is enough to show that $e^{57}$ and $\sigma$ generate $c^0$, as then
\[ e^{57} = c^0 \circ \sigma \circ f_{001, 011} \circ \sigma^{-1} \implies
f_{001, 011} = \sigma^{-1} c^0 e^{57} \sigma
\] 
and Theorem~\ref{thm:TwoWords} shows that $G$ is generated by $\{c^0, f_{001, 011}, \sigma\}$.

Writing $a = \sigma \circ e^{57} \circ \sigma^{-1}$, $b = e^{57}$, $c = \sigma^{-1} \circ e^{57} \circ \sigma$ and omitting $\circ$ from the notation, it is easy to verify (for example by computer) that
\[ c^0 = abcabcbababacbabababcbcabacbabcbcbcbcabcbcbabacbcb. \]
Python code for checking the above identity is included, see Appendix~\ref{sec:Code}.

We have $e^{99} = R e^{57} R$, so $e^{99}$ is also universal since $R \langle \sigma \rangle R = \langle \sigma \rangle$ and $RGR = G$).
\end{proof}

Translating the proof to concrete expressions for the standard gates leads to rather long compositions of shifts of $e^{57}$. A straight-line (context-free) grammar for generating such expression is given in Appendix~\ref{sec:SLG}, and we provide the strings themselves in Appendix~\ref{sec:Strings}.

Note that though one gate can be universal, one element of $G$ cannot generate it, since $G$ is not abelian, thus not cyclic. 

The minimal number $3$ of shift-conjugates that needed to be considered was checked by GAP, and the expression itself was found by a naive Python search, by computing the ball of radius $25$ and finding $g, h$ such that $gh$ is the flip (according to my program, 49 generators do not suffice).

By Theorem~\ref{thm:RingCase}, this also solves Conjecture 5.10 in \cite{MaMcMo11}, also asked in \cite{Vi12}.

\begin{corollary}
For all $n \geq 4$, $G_n = \langle e^{57}_n, \sigma_n \rangle = \langle e^{99}_n, \sigma_n \rangle$.
\end{corollary}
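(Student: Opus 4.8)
The plan is to combine the universality result of Theorem~\ref{thm:Main} with the transfer principle of Theorem~\ref{thm:RingCase}, reducing the infinite-$n$ case to an immediate corollary, and then to dispose of the finitely many remaining small values of $n$ by direct computation.

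First I would record that, by Theorem~\ref{thm:Main}, the two-element set $F = \{e^{57}, \sigma\}$ is a finite generating set of $G$. Feeding $F$ into Theorem~\ref{thm:RingCase} immediately yields that $\{e^{57}_n, \sigma_n\}$ generates $G_n$ for all sufficiently large $n$, say $n \geq N_0$. Unwinding that proof, each member of the standard generating set $\{s, c^0, c^2, \sigma\}$ of $G$ (Lemma~\ref{lem:Classical}) is written as a word in $e^{57}$ and $\sigma$; Lemma~\ref{lem:WeakLEF} then reproduces the same words coordinate-for-coordinate as words in $e^{57}_n$ and $\sigma_n$ generating $\{s_n, c^0_n, c^2_n, \sigma_n\}$, and by Lemma~\ref{lem:GeneratingGn} the latter set generates $G_n$ for $n \geq 4$. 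The threshold $N_0$ is governed by the hypothesis of Lemma~\ref{lem:WeakLEF}: it is (an upper bound for) the size of the largest coordinate window swept by these words once the shift contributions $t$ and the radii $R_i$ are taken into account.

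The $e^{99}$ half of the statement then requires no extra work. Since $e^{99} = R e^{57} R$, conjugation by the reversal on $\Z/n\Z$ sends $e^{57}_n$ to $e^{99}_n$ and $\sigma_n$ to $\sigma_n^{-1}$; as $\langle \sigma_n \rangle = \langle \sigma_n^{-1} \rangle$ and any permutation of $\{0,1\}^n$ normalizes the alternating group $G_n = \Alt(\{0,1\}^n)$, the identity $\langle e^{57}_n, \sigma_n \rangle = G_n$ transports to $\langle e^{99}_n, \sigma_n \rangle = G_n$.

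The one genuine gap is the range $4 \leq n < N_0$, where the coordinate window of the universality word wraps around the cycle and Lemma~\ref{lem:WeakLEF} no longer guarantees that the relations surviving in $G$ also survive in $G_n$. For these finitely many values I would simply verify the claim by direct computation: each $G_n = \Alt(\{0,1\}^n)$ is a concrete finite group, $e^{57}_n$ and $\sigma_n$ are explicit permutations of $\{0,1\}^n$ via the formulas of Remark~\ref{rem:Formula}, and one checks (e.g.\ in GAP, as with the other finite verifications in this paper) that the subgroup they generate is all of $G_n$. I expect this finite step---pinning down a usable $N_0$ and closing the small cases---to be the only real obstacle, the large-$n$ portion being essentially automatic from the two cited theorems.
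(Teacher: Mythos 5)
Your proposal matches the paper's proof: the paper likewise feeds the generating set from Theorem~\ref{thm:Main} into Theorem~\ref{thm:RingCase}, notes that the proof of that theorem kicks in at ring size $5$, and disposes of the remaining small cases by finite verification --- citing the computations already done in \cite{MaMcMo11,Vi12} for ring sizes up to $8$ and $10$ rather than redoing them. The $e^{99}$ case follows by the reversal symmetry exactly as you describe.
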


\begin{proof}
This is known for ring sizes up to $8$ \cite{MaMcMo11} (and checked up to 10 in \cite{Vi12}). Theorem~\ref{thm:RingCase} (more precisely its proof) kicks in at ring size $5$.
\end{proof}

As stated, the concrete implementations we give in the appendix work starting only from $n = 8$ -- but actually the formulas happen to work already for $n = 4$.

\section{Questions and future work}

We do not know what the general conditions are under which a single gate, together with the shift, is universal, but this is semidecidable by Lemma~\ref{lem:Classical} and Theorem~\ref{thm:Main}. More generally, the set of finite sets of gates which are universal for $G$ is a $\Sigma^0_1$ set. We do not know whether non-universality is semidecidable, and do not know whether the problem is $\Sigma^0_1$-complete.

Given a finite set of gates $F$, it is not clear whether it is semidecidable in either direction whether $F_n$ together with the shift $\sigma_n$ generates $G_n$ for all large enough $n$. Without some additional argument, universal gate sets only form a $\Sigma^0_2$ set in this context.

Note also that $e_{57}$ is the composition of a word swap and the flip. We do not have a characterization of word swaps whose composition with the flip is a singleton universal gate set. A related question is whether a random inert gate in $\Sym(\{0,1\}^n)$, together with the shift, generates $G$ with high probability (as $n \rightarrow \infty$).

We asked a related question also in \cite{Sa18a}, namely whether such a gate is even (almost always) universal as a gate on $[0,n-1]$, when we are not allowed to apply it over the borders. We do not know any gates that are universal in this stronger sense; that gate $e^{57}$ is not universal in this sense.

We do not know whether $G$ is finitely presented. A presentation for the groupoid of reversible gates is also not known \cite{Se16}.

In \cite{AaGrSc15} the analog of Post's lattice for all reversible bit operations is fully described. Since they allow ancilla bits (bits containing some prescribed value, and which need not return to their initial value in the end), it is not immediately clear that this classification implies something in our framework. A full description of the lattice of subgroups of $G$ containing the shift and the swap would amount to classifying the lattice of reversible transformations implementable using \emph{borrowed bits}, in the terminology of \cite{Xu15}; we do not know if there is a known classification. We have not checked whether the variated Toffoli gate of \cite{Xu15} is a universal singleton gate set in our sense.

We note that, unlike the lattice of \cite{AaGrSc15} and Post's lattice, the lattice of subgroups of $G$ containing the shift (but not necessarily the swap) is uncountable -- it is easy to embed, for example, the infinite direct sum $\bigoplus_{n \in \N} \Z_2$, which has uncountably many subgroups.

There are obvious generalizations of $G$ which seem interesting. It is a standard direction of generalization in the theory of reversible gates to change the binary alphabet to a higher-arity one, and sometimes the qualitative properties change, e.g. the parity of the finitely-generated part of the group of reversible gates depends on the parity of the alphabet \cite{Bo15,Se16,BoKaSa16}. Here, the fact we have a geometry for the arrangement of the wires allows us to do much more: we can change the geometry to an arbitrary group, and instead of just increasing the size of the alphabet, we can replace the set of legal configurations by a subshift.

Finite generating sets for the analog of $G$ on any full shift follow from results about reversible gates (see \cite{BoKaSa16} for the general case) as in Lemma~\ref{lem:Classical}. We leave open whether a universal gate exists on every alphabet, and more generally whether one exists on every mixing SFT \cite{LiMa95}, when using the obvious generalizations.

If $\Z$ is replaced by a group $\mathcal{G}$, then the corresponding $G = G_{\mathcal{G}}$ is a (locally finite)-by-$\mathcal{G}$ group, and thus inherits many properties of $\mathcal{G}$. It is finitely generated if and only if $\mathcal{G}$ is, essentially by Lemma~\ref{lem:Classical}. If $\mathcal{G} = \Z^d$, then since $\Z^d$ is an abelian epimorphic image of rank $d$ of $G_{\Z^d}$, at least $d$ generators are needed. Is $G_{\Z^d}$ generated by $d$ elements, or at least $d+1$ elements, for $d > 1$? Is there always a universal inert gate? Again, one can also ask such questions for larger alphabets and other subshifts.

\section*{Acknowledgements}

We thank Ilkka T\"orm\"a for useful suggestions. 

\bibliographystyle{plain}
\bibliography{../../../bib/bib}{}

\newpage

\appendix

\section{Straight-line grammar for the standard generators}
\label{sec:SLG}

We give a context-free grammar, obtained directly from the proof, that describes the standard generators for the swap $s$, the flip $c^0$, the CNOTs $c^1, Rc^1R$, and the Toffoli gate $c^2$, more precisely the grammar generates a single string over the alphabet $\{1,2,...,6\}$ whose letters indicate the cell of $[0,n]$ ($n \geq 7$) where the $e^{57}$ gate should be applied. The grammar generates the standard gates in cell $4 \in [0,n]$ cell when the start symbol is varied.

In the grammar we use an extension of the NCT naming scheme: $N_i$ applies $c^0$ at $i$ (i.e. it applies $\sigma^{-i} \circ c^0 \circ \sigma^i$), $C_i$ corresponds to $\sigma^{-i} \circ c^1 \circ \sigma^i$ and $T_i$ corresponds to $\sigma^{-i} \circ c^2 \circ \sigma^i$. The swap of cells $i$ and $i+1$ is $S_i$, and $\sigma^{-i} \circ R \circ c^1 \circ R \circ \sigma^i$ is $D_i$. The terminal symbol $i \in \{1,...,6\}$ is interpreted as $\sigma^{-i} \circ e^{57} \circ \sigma^i$.

The grammar is $G = (V, \Sigma, R, S')$, where the start symbol is the element of $\{N_3, C_3, T_3, D_3, S_3\}$ we want to generate,
\[ V = \{N_3, C_3, T_3, D_3, S_3\} \cup \{ N_2, N_4, N_5, D_4, \bar E_3, \bar E_4 \}, \]
\[ \Sigma = \{1,2,3,4,5,6\}, \]
and $R$ contains the following rewrite rules:
\[ T_3 \rightarrow S_3 N_3 \bar E_4 N_3 S_3 \]
\[ S_3 \rightarrow C_3 D_4 C_3 \]
\[ C_3 \rightarrow \bar E_3 N_2 \bar E_3 N_2 \]
\[ D_3 \rightarrow N_2 \bar E_3 N_4 \bar E_3 N_2 N_4 \]
\[ D_4 \rightarrow N_3 \bar E_4 N_5 \bar E_4 N_3 N_5 \]
\[ \bar E_3 \rightarrow N_3 3 \]
\[ \bar E_4 \rightarrow N_4 4 \]
\[ N_2 \rightarrow 12312321212132121212323121321232323231232321213232 \]
\[ N_3 \rightarrow 23423432323243232323434232432343434342343432324343 \]
\[ N_4 \rightarrow 34534543434354343434545343543454545453454543435454 \]
\[ N_5 \rightarrow 45645654545465454545656454654565656564565654546565 \]

\newpage 

\section{The standard gates}
\label{sec:Strings}

We list the strings generated by the straight-line grammar from Section~\ref{sec:SLG} from start symbols $N_3, C_3, T_3, D_3, S_3$, i.e. implementations of standard gates as shifts of  $e^{57}$. 

$N_3 \rightarrow $
\printnumber{23423432323243232323434232432343434342343432324343}

$C_3 \rightarrow $
\printnumber{1231232121213212121232312132123232323123232121323232342343232324323232343423243234343434234343232434312312321212132121212323121321232323231232321213232323423432323243232323434232432343434342343432324343}

$T_3 \rightarrow $
\printnumber{123123212121321212123231213212323232312323212132323234234323232432323234342324323434343423434323243431231232121213212121232312132123232323123232121323232342343232324323232343423243234343434234343232434345645654545465454545656454654565656564565654546565234234323232432323234342324323434343423434323243434345345434343543434345453435434545454534545434354544564565454546545454565645465456565656456565454656543453454343435434343454534354345454545345454343545423423432323243232323434232432343434342343432324343123123212121321212123231213212323232312323212132323234234323232432323234342324323434343423434323243431231232121213212121232312132123232323123232121323232342343232324323232343423243234343434234343232434323423432323243232323434232432343434342343432324343434534543434354343434545343543454545453454543435454234234323232432323234342324323434343423434323243431231232121213212121232312132123232323123232121323232342343232324323232343423243234343434234343232434312312321212132121212323121321232323231232321213232323423432323243232323434232432343434342343432324343456456545454654545456564546545656565645656545465652342343232324323232343423243234343434234343232434343453454343435434343454534354345454545345454343545445645654545465454545656454654565656564565654546565434534543434354343434545343543454545453454543435454234234323232432323234342324323434343423434323243431231232121213212121232312132123232323123232121323232342343232324323232343423243234343434234343232434312312321212132121212323121321232323231232321213232323423432323243232323434232432343434342343432324343}

$D_3 \rightarrow $
\printnumber{34534543434354343434545343543454545453454543435454123123212121321212123231213212323232312323212132323234234323232432323234342324323434343423434323243433453454343435434343454534354345454545345454343545432342343232324323232343423243234343434234343232434312312321212132121212323121321232323231232321213232}

$S_3 \rightarrow $
\printnumber{1231232121213212121232312132123232323123232121323232342343232324323232343423243234343434234343232434312312321212132121212323121321232323231232321213232323423432323243232323434232432343434342343432324343456456545454654545456564546545656565645656545465652342343232324323232343423243234343434234343232434343453454343435434343454534354345454545345454343545445645654545465454545656454654565656564565654546565434534543434354343434545343543454545453454543435454234234323232432323234342324323434343423434323243431231232121213212121232312132123232323123232121323232342343232324323232343423243234343434234343232434312312321212132121212323121321232323231232321213232323423432323243232323434232432343434342343432324343}

\newpage

\section{Checking the identity in Theorem~\ref{thm:Main}}
\label{sec:Code}

\lstinputlisting[language=Python]{checker.py}

\end{document}